\documentclass[a4paper,12pt]{article}
\usepackage{latexsym,amsmath,amsthm,amssymb}
\usepackage{a4wide}
\usepackage{hyperref}
\usepackage{marginnote}
\usepackage{color}
\usepackage{xcolor}
\usepackage{cite}
\hypersetup{
pdftitle={Adams hyperbolic}   
pdfauthor={Van Hoang Nguyen},
colorlinks = true,
linkcolor = magenta,
citecolor = blue,
}

\theoremstyle{plain}
\newtheorem{theorem}{Theorem}[section]



\newtheorem{proposition}[theorem]{Proposition}
\newtheorem{lemma}[theorem]{Lemma}

\theoremstyle{definition}

\theoremstyle{remark}



\renewcommand{\thefootnote}{\arabic{footnote}}

\def\R{\mathbb R}
\def\N{\mathbb N}


\def\al{\alpha}
\def\om{\omega}
\def\Om{\Omega}
\def\be{\beta}

\def\si{\sigma}
\def\lam{\lambda}
\def\ep{\epsilon}
\def\na{\nabla}
\def\pa{\partial}
\def\la{\langle} 
\def\ra{\rangle} 
\def\lt{\left}
\def\rt{\right}

\def\H{\mathbb H}

\def\leqs{<}
\def\geqs{>}


\numberwithin{equation}{section}


\title{The sharp Sobolev type inequalities in the Lorentz--Sobolev spaces in the hyperbolic spaces}
\author{Van Hoang Nguyen
}

\begin{document}
\maketitle


\renewcommand{\thefootnote}{}

\footnote{Email:  \href{mailto: Van Hoang Nguyen <vanhoang0610@yahoo.com>}{vanhoang0610@yahoo.com}.}

\footnote{2010 \emph{Mathematics Subject Classification\text}: 26D10, 46E35, 46E30, }

\footnote{\emph{Key words and phrases\text}: Poincar\'e inequality, Poincar\'e--Sobolev inequality, Moser--Trudinger inequality, Lorentz--Sobolev space, hyperbolic spaces.}

\renewcommand{\thefootnote}{\arabic{footnote}}
\setcounter{footnote}{0}

\begin{abstract}
Let $W^1L^{p,q}(\H^n)$, $1\leq q,p \leqs \infty$ denote the Lorentz--Sobolev spaces of order one in the hyperbolic spaces $\H^n$. Our aim in this paper is three-fold. First of all, we establish a sharp Poincar\'e inequality in $W^1L^{p,q}(\H^n)$ with $1\leq q \leq p$ which generalizes the result in \cite{NgoNguyenAMV} to the setting of Lorentz--Sobolev spaces. Second, we prove several sharp Poincar\'e--Sobolev type inequalities in $W^1L^{p,q}(\H^n)$ with $1\leq q \leq p \leqs n$ which generalize the results in \cite{NguyenPS2018} to the setting of Lorentz--Sobolev spaces. Finally, we provide the improved Moser--Trudinger type inequalities in $W^1L^{n,q}(\H^n)$ in the critical case $p= n$ with $1\leq q \leq n$ which generalize the results in \cite{NguyenMT2018} and improve the results in \cite{YangLi2019}. In the proof of the main results, we shall prove a P\'olya--Szeg\"o type principle in $W^1 L^{p,q}(\H^n)$ with $1\leq q \leq p$ which maybe is of independent interest.

\end{abstract}

\section{Introduction}
It is well known that the Sobolev type inequalities such as Poincar\'e inequality, Sobolev inequality, the Moser--Trudinger inequality, etc are the important and useful tools in many branches of mathematics such as Analysis, Geometry, Calculus of Variations, Partial Differential Equations, etc. In this paper, we study the Sobolev type inequalities in the Lorentz--Sobolev spaces defined in the hyperbolic spaces. Especially, we are interested in studying the sharp form of these inequalities. Let us start by recalling several related results in the setting of Euclidean spaces. Let $\Om$ be an open subset of $\R^n$, $n\geq 2$ and let $W^{1,p}_0(\Om)$ with $1\leq p \leqs \infty$ be the  the usual Sobolev space obtained by completion of $C_0^\infty(\Om)$ under the norm $\|\na u\|_{L^p(\Om)} =\Big(\int_\Om |\na u|^p dx\Big)^{\frac 1p}$. For bounded domain $\Om$, the Poincar\'e inequality asserts that
\begin{equation}\label{eq:PoincareE}
\int_\Om |\na u|^p dx \geq C_P(p,\Om)\int_\Om |u|^p dx,\quad u\in W^{1,p}_0(\Om)
\end{equation}
where $C_P(p,\Om)$ denotes the possibly smallest constant for which \eqref{eq:PoincareE} holds. It is an interesting problem is to determine the explicit value of $C_P(p,\Om)$. A few results for $C_P(2,\Om)$ is known when $\Om$ is the unit ball or a convex domain (see, e.g., \cite{Acosta,Kuznetsov,Payne}). For general domain $\Om$ and arbitrary $p$, determining the constant $C_P(p,\Om)$ is a hard task since the value of $C_P(p,\Om)$ depends on $p$ and the geometry of $\Om$. 

In the hyperbolic spaces, the Poincar\'e inequality was proved by Tataru \cite{Tataru2001}. Let $\H^n$ denote the hyperbolic space of dimension $n\geq 2$ that is a Riemannian manifold with a Riemannian metric $g$ such that its sectional curvature is $-1$ (see Section \S2 below for more details). It was proved in \cite{Tataru2001} that 
\begin{equation}\label{eq:Tataru}
\int_{\H^n} |\na_g u|_g^p dV_g \geq C \int_{\H^n} |u|^p dV_g,\quad u\in C_0^\infty(\H^n)
\end{equation}
for some constant $C \geqs 0$, where $\na_g, |\cdot|_g$ and $dV_g$ denote the hyperbolic gradient, hyperbolic length of a vector field and the volume element in $\H^n$ with respect to the Riemannian metric $g$, respectively. Finding the sharp value of constant $C$ in \eqref{eq:Tataru} is an interesting question. Mancini and Sandeep \cite{ManciniSandeep2008} show that the sharp value of the constant $C$ in \eqref{eq:Tataru} is $(N-1)^2/4$ when $p =2$. For arbitrary $p$, it was proved by Ngo and the author \cite{NgoNguyenAMV} that the sharp constant $C$ in \eqref{eq:Tataru} is $(N-1)^p/p^p$ (see \cite{BerchioNA} for another proof). The first aim in this paper is to generalize the sharp Poincar\'e inequality in $\H^n$ to the more general context of the Lorentz--Sobolev spaces defined in $\H^n$. For $1\leq p, q \leqs \infty$, we denote by $W^1 L^{p,q}(\H^n)$ the Lorentz--Sobolev space of order one in $\H^n$ which is the completion of $C_0^\infty(\H^n)$ under the quasi-norm $\|\na_g u\|_{p,q} := \| |\na_g u|_g\|_{p,q}$ (see Section \S2 below for the definition of Lorentz spaces and the Lorentz semi-norm $\|\cdot\|_{p,q}$. The first main result in this paper is as follows

\begin{theorem}\label{Poincare}
Let $n \geq 2$ and $1 \leqs q \leq p \leqs \infty$. Then it holds 
\begin{equation}\label{eq:PoincareLorentz}
\|\na_g u\|_{p,q}^q \geq \lt(\frac{n-1}p\rt)^q \|u\|_{p,q}^q,\quad\forall\, u\in W^1 L^{p,q}(\H^n).
\end{equation}
Furthermore, the constant $(n-1)^q/p^q$ in \eqref{eq:PoincareLorentz} is sharp and never attained in $W^1 L^{p,q}(\H^n)$.
\end{theorem}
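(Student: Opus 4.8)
The plan is to reduce \eqref{eq:PoincareLorentz} to a sharp one-dimensional weighted Hardy inequality by means of hyperbolic symmetrization, and to read off optimality and non-attainment from a family of test functions pushed out to spatial infinity. It suffices to argue for $u\in C_0^\infty(\H^n)$, the general case following by density (this simultaneously shows that $\|u\|_{p,q}<\infty$ for $u$ in the completion). Given such a $u$, let $u^\sharp$ be its hyperbolic symmetrization, i.e.\ the radial and nonincreasing function on $\H^n$ equidistributed with $|u|$. By the P\'olya--Szeg\"o principle in $W^1L^{p,q}(\H^n)$ (available because $q\le p$) one has $\|\na_g u^\sharp\|_{p,q}\le\|\na_g u\|_{p,q}$, while symmetrization preserves the Lorentz quasi-norm, $\|u^\sharp\|_{p,q}=\|u\|_{p,q}$; hence it is enough to prove \eqref{eq:PoincareLorentz} for $u=u(r)$ radial and nonincreasing, where $r$ is the geodesic distance to a fixed point. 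Writing $F(r):=V_g(B(0,r))$ and $\phi:=u\circ F^{-1}$, so that $u(r)=\phi(F(r))$ with $\phi$ nonincreasing and $\phi(\infty)=0$, and noting that $r\mapsto F(r)$ carries the radial part of $V_g$ onto Lebesgue measure on $(0,\infty)$, we get $\|u\|_{p,q}=\|\phi\|_{p,q}$ and $\|\na_g u\|_{p,q}=\|G\|_{p,q}$, where $G(t):=|\phi'(t)|\,F'(F^{-1}(t))$ and all one-dimensional quasi-norms are taken on $(0,\infty)$ with Lebesgue measure.

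The geometric ingredient is the elementary strict inequality $F'(r)>(n-1)F(r)$ for all $r>0$ -- equivalently $(\sinh r)^{n-1}>(n-1)\int_0^r(\sinh s)^{n-1}\,ds$, which holds since the difference vanishes at $r=0$ and has derivative $(n-1)(\sinh r)^{n-2}e^{-r}>0$. Evaluating at $r=F^{-1}(t)$ gives $F'(F^{-1}(t))>(n-1)t$, hence the pointwise bound $G(t)\ge(n-1)\,t\,|\phi'(t)|$, and by monotonicity of the decreasing rearrangement $\|\na_g u\|_{p,q}^q=\|G\|_{p,q}^q\ge(n-1)^q\big\|\,t\mapsto t|\phi'(t)|\,\big\|_{p,q}^q$. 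Since $q\le p$, the weight $t^{q/p-1}$ is nonincreasing, so the Hardy--Littlewood rearrangement inequality yields $\|w\|_{p,q}^q=\int_0^\infty (w^*(t))^q\,t^{q/p-1}\,dt\ge\int_0^\infty w(t)^q\,t^{q/p-1}\,dt$ for any $w\ge0$; with $w(t)=t|\phi'(t)|$ this gives $\big\|\,t\mapsto t|\phi'(t)|\,\big\|_{p,q}^q\ge\int_0^\infty t^{q/p+q-1}\,|\phi'(t)|^q\,dt$. Finally, writing $\phi(t)=\int_t^\infty|\phi'(s)|\,ds$ and using the power-weighted Hardy inequality
\[
\int_0^\infty t^{q/p-1}\Big(\int_t^\infty g(s)\,ds\Big)^q dt\ \le\ p^q\int_0^\infty t^{q/p+q-1}g(t)^q\,dt\qquad(g\ge0),
\]
whose constant $p^q$ is sharp (an identity by Fubini when $q=1$, and a standard application of H\"older's inequality with the optimal exponent split when $q>1$), we obtain $\int_0^\infty t^{q/p+q-1}|\phi'(t)|^q\,dt\ge p^{-q}\int_0^\infty t^{q/p-1}\phi(t)^q\,dt=p^{-q}\|\phi\|_{p,q}^q=p^{-q}\|u\|_{p,q}^q$. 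Chaining the three estimates proves \eqref{eq:PoincareLorentz}.

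For sharpness I would take functions concentrating at infinity: fix $a_k\to\infty$ and $b_k$ with $b_k/a_k\to\infty$, and set $\phi_k(t)=p\big[\min(t^{-1/p},a_k^{-1/p})-b_k^{-1/p}\big]_+$, $u_k(r)=\phi_k(F(r))$. On $(a_k,b_k)$ one has $|\phi_k'(t)|=t^{-1/p-1}$, so $G_k(t)=t^{-1/p-1}F'(F^{-1}(t))=(n-1)t^{-1/p}(1+o(1))$ uniformly as $a_k\to\infty$ (using $F'(r)/F(r)\to n-1$ as $r\to\infty$), and since $\phi_k$ is nonincreasing while the rearrangement of $G_k$ is explicit, a direct computation gives $\|u_k\|_{p,q}^q\sim p^q\log(b_k/a_k)$ and $\|\na_g u_k\|_{p,q}^q\sim(n-1)^q\log(b_k/a_k)$, so the quotient tends to $(n-1)^q/p^q$. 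For non-attainment: if equality held for some $u\not\equiv0$ with $\|u\|_{p,q}<\infty$, then $u^\sharp$ would realize equality at every step of the chain; equality in $\|G\|_{p,q}^q\ge(n-1)^q\|\,t|\phi'(t)|\,\|_{p,q}^q$ forces $G(t)=(n-1)t|\phi'(t)|$ a.e., i.e.\ $F'(F^{-1}(t))=(n-1)t$ wherever $\phi'(t)\ne0$; but that inequality is \emph{strict} for every $t>0$, so $\phi'\equiv0$, hence $\phi$ is constant, which together with $\phi(\infty)=0$ forces $u\equiv0$, a contradiction.

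The step I expect to be the main obstacle is handling the Lorentz (rearrangement) structure of $\|\na_g u\|_{p,q}$: because $|\na_g u^\sharp|_g$ need not be monotone, its decreasing rearrangement is not simply $|(u^\sharp)'|\circ F^{-1}$, so the geometric gain $F'\ge(n-1)F$ has to be inserted \emph{before} rearranging, after which the Hardy--Littlewood inequality -- which is precisely where $q\le p$ is used -- converts the Lorentz quasi-norm into a genuine weighted $L^q$ integral to which the classical Hardy inequality applies with its sharp constant. The other delicate ingredient, the P\'olya--Szeg\"o principle in $W^1L^{p,q}(\H^n)$, is for this reason developed separately in the paper.
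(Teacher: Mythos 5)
Your proof is correct and follows essentially the same route as the paper's: both reduce to radial non-increasing functions via the P\'olya--Szeg\"o principle of Theorem \ref{PS}, exploit the strict geometric bound $n\sigma_n\sinh^{n-1}(F(t))>(n-1)t$ (the paper's \eqref{eq:estF}, your $F'(F^{-1}(t))>(n-1)t$ in opposite notation), use the Hardy--Littlewood rearrangement inequality against the non-increasing weight $t^{q/p-1}$ (which is exactly where $q\le p$ enters), and conclude with the sharp one-dimensional power-weighted Hardy inequality with constant $p^q$, which the paper implements through integration by parts and H\"older in display \eqref{eq:PSL}. The only cosmetic differences are the order in which the geometric bound and the rearrangement step are applied and the tail truncation in the extremizing family (you subtract $b_k^{-1/p}$, the paper interpolates linearly on $[R,2R]$); neither changes the substance of the argument.
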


Theorem \ref{Poincare} covers the result in \cite{NgoNguyenAMV} for $m =1$ and in \cite{BerchioNA} in the special case $p= q$. Notice that the sharp constant $(n-1)^p/p^p$ in \eqref{eq:Tataru} is never archived in $W^{1,p}(\H^n)$. This leaves a room for several improvements of the inequality \eqref{eq:Tataru} with sharp constant. Notice that the non achievement of sharp constant does not always imply improvement (e.g., Hardy operator in the Euclidean space $\R^n, n \geq 2$). However, in the hyperbolic space, the operator $-\Delta_{g,p}-(\frac{n-1}p)^p =-\text{\rm div}(|\na_g \cdot|_g^{p-2} \na_g \cdot)-(\frac{n-1}p)^p$ is sub-critical, hence improvement is possible. For examples, the reader can consult the papers \cite{BerchioNA,BerchioJFA} for the improvements of \eqref{eq:Tataru} by adding the remainder terms concerning to Hardy weights, i.e., the inequalities of the form 
\[
\int_{\H^n} |\na_g u|_g^p dV_g -\lt(\frac{n-1}p\rt)^p \int_{\H^n} |u|^p dV_g \geq \int_{\H^n} W |u|^p dV_g
\]
where $W \geq 0$ is the weight $W$ satisfying some appropriate conditions. For the case $p =2$, Mancini and Sandeep \cite{ManciniSandeep2008} proved the following Poincar\'e--Sobolev inequality which provides another improvement of \eqref{eq:Tataru}
\begin{equation}\label{eq:MS}
\int_{\H^n} |\na_g u|_g^2 dV_g -\lt(\frac{n-1}2\rt)^2 \int_{\H^n} |u|^2 dV_g \geq C \lt(\int_{\H^n} |u|^{\frac{2n}{n-2}} dV_g\rt)^{\frac{n-2}n},
\end{equation}
for some $C \geqs 0$. The inequality \eqref{eq:MS} is equivalent to a Hardy--Sobolev--Maz'ja inequality in the half space (see \cite[Section $2.1.6$]{Maz'ja}). Let $C_n$ denote the possibly smallest constant $C$ in the right hand side of \eqref{eq:MS}. In \cite{Tertikas}, Tertikas and Tintarev proved for $n\geq 4$ that $C_n \leqs S_n$ and $C_n$ is attained, where $S_n$ denotes the best constant in the $L_2$ Sobolev inequality in $\R^n$ (see \cite{Aubin1976,Talenti1976}). More surprising in three dimensional cases, it was show that $C_3 = S_3$ and is not attained (see \cite{BenguriaFrankLoss}). For arbitrary $p\not=2$, the author established in \cite{NguyenPS2018} the following $L^p$ Poincar\'e--Sobolev inequality in $\H^n$ 
\begin{equation}\label{eq:NguyenPS}
\int_{\H^n} |\na_g u|_g^p dV_g -\lt(\frac{n-1}p\rt)^p \int_{\H^n} |u|^p dV_g \geq S_{n,p}^p \lt(\int_{\H^n} |u|^{\frac{np}{n-p}} dV_g\rt)^{\frac{n-p}n}
\end{equation}
for $n\geq 4$ and $\frac{2n}{n-1} \leq p  \leqs n$ where $S_{n,p}, 1\leq p \leqs n$ is the sharp constant in the $L_p$ Sobolev inequality in $\R^n$
\begin{equation}\label{eq:SobolevRn}
\int_{\R^n} |\na u |^p dx \geq S_{n,p}^p \lt(\int_{\R^n} |u|^{\frac{np}{n-p}} dx\rt)^{\frac{n-p}{n}},\quad u\in W^{1,p}(\R^n).
\end{equation}
The sharp constant $S_{n,p}$ in \eqref{eq:SobolevRn} was found independently by Talenti \cite{Talenti1976} and Aubin \cite{Aubin1976} (another proof of the sharp Sobolev inequality \eqref{eq:SobolevRn} via the optimal transport of measure could be found in \cite{CNV}). The second aim in this paper is to establish an analogue of the Poincar\'e--Sobolev inequality \eqref{eq:NguyenPS} in the Lorentz--Sobolev space $W^1 L^{p,q}(\H^n)$ which provide an improvement of the sharp Poincar\'e inequality from Theorem \ref{Poincare}. Before stating our next results, let us recall that the sharp Sobolev inequality in the Lorentz--Sobolev space $W^1 L^{p,q}(\R^n)$ (the completion of $C_0^\infty(\R^n)$ under the Lorentz quasi-norm $\|\na \cdot \|_{p,q}$) was proved by Alvino \cite{Alvino1977}. More precisely, Alvino has show for $1 \leq q \leq p \leqs n$ that
\begin{equation}\label{eq:Alvino}
\|\na u\|_{p,q}^q \geq \lt(\frac{n-p}p\rt)^q \sigma_n^{\frac qn} \|u\|_{p^*,q}^q,\quad u \in W^1 L^{p,q}(\R^n)
\end{equation}
where $\sigma_n$ is the volume of the unit ball in $\R^n$. The inequality \eqref{eq:Alvino} is sharp, and in the case $q =p$ it is equivalent to the famous Hardy inequality (see \cite{Davies,Kufner} for the history of the Hardy inequality). In recent paper, Cassani, Ruf and Tarsi \cite{CassaniRufTarsi2018} extend the inequality \eqref{eq:Alvino} to the case $p \leqs q \leq \infty$.

Our next main result reads as follows

\begin{theorem}\label{PoincareSobolev}
Let $n\geq 4$ and $\frac{2n}{n-1}\leq q \leq p \leqs n$. Then, for any $q\leq l \leq \frac{nq}{n-p}$ we have
\begin{equation}\label{eq:PSLorentz1}
\|\na_g u\|_{p,q}^q - \lt(\frac{n-1}p\rt)^q \|u\|_{p,q}^q \geq  S_{n,p,q,l}^q \|u\|_{p^*,l}^q,\quad\forall\, u\in W^1 L^{p,q}(\H^n),
\end{equation}
where $p^* = \frac{np}{n-p}$, and
\[
S_{n,p,q,l} =\begin{cases}
\lt[n^{1-\frac ql} \sigma_n^{\frac qn} \Big(\frac{(n-p)(l-q)}{qp}\Big)^{q+\frac ql -1} S\Big(\frac{lq}{l-p},q\Big)\rt]^{\frac 1q} &\mbox{if $q \leqs l \leq \frac{nq}{n-p}$,}\\
\frac{n-p}p \sigma_n^{\frac 1n}&\mbox{if $l =q$,}
\end{cases}
\]
with $S\big(\frac{lq}{l-p},q\Big)$ being the sharp constant in the Sobolev inequality with fractional dimension appearing in the Lemma \ref{Sobolevfract} below. Moreover, the constant $S_{n,p,q,l}$ on the right hand side of \eqref{eq:PSLorentz1} is sharp.
\end{theorem}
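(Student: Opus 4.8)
The plan is to reduce the inequality on $\H^n$ to a one-dimensional weighted inequality via symmetrization, then to the Euclidean/fractional-dimensional Sobolev inequality of Lemma \ref{Sobolevfract}. First I would invoke the P\'olya--Szeg\"o type principle in $W^1 L^{p,q}(\H^n)$ announced in the abstract: given $u\in W^1 L^{p,q}(\H^n)$, its hyperbolic symmetric decreasing rearrangement $u^\sharp$ satisfies $\|\na_g u^\sharp\|_{p,q}\le \|\na_g u\|_{p,q}$ while $\|u^\sharp\|_{p^*,l}=\|u\|_{p^*,l}$ and $\|u^\sharp\|_{p,q}=\|u\|_{p,q}$; hence it suffices to prove \eqref{eq:PSLorentz1} for nonnegative, radially symmetric, nonincreasing functions. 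For such $u$, writing the hyperbolic volume in geodesic polar coordinates with radial density proportional to $(\sinh r)^{n-1}$, all three quasi-norms become explicit integrals in the single variable $r\in(0,\infty)$, and $|\na_g u|_g = |u'(r)|$.

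The core step is the one-dimensional estimate. After the change of variable, the left side of \eqref{eq:PSLorentz1} is a difference of the form $\int_0^\infty$-type Lorentz functionals, and I would use the pointwise/Hardy-type trick from \cite{NgoNguyenAMV,NguyenPS2018}: substitute $u(r) = \phi(r)\,\psi(r)$ where $\psi(r)$ is (a power of) the ground state $(\cosh(r/(\cdot)))^{-\cdot}$ or more simply absorb the factor $((n-1)/p)^q\|u\|_{p,q}^q$ by an integration by parts that exactly cancels the "bad" part of $\|\na_g u\|_{p,q}^q$, leaving a remainder that dominates a Euclidean-type gradient integral with a fractional effective dimension. Concretely, the subtraction of $((n-1)/p)^q\|u\|_{p,q}^q$ should convert the $\sinh$ weight into an integrable weight behaving like $r^{\,\mu-1}$ near zero for a suitable real $\mu = \mu(n,p,q,l)>0$, so that the remaining functional is precisely the left side of the fractional-dimensional Lorentz--Sobolev inequality of Lemma \ref{Sobolevfract} with dimension parameter $\tfrac{lq}{l-p}$ (this is the origin of that exact fractional index). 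Applying Lemma \ref{Sobolevfract} then yields the right side $S_{n,p,q,l}^q\|u\|_{p^*,l}^q$, and tracking the constants through the two changes of variable produces the displayed formula for $S_{n,p,q,l}$, including the degenerate value $\tfrac{n-p}p\sigma_n^{1/n}$ when $l=q$ (where the Lorentz--Sobolev inequality degenerates to Hardy's inequality, cf.\ \eqref{eq:Alvino}).

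For sharpness, I would test the inequality on the family of extremals (suitably truncated and rescaled) of the Euclidean Lorentz--Sobolev inequality \eqref{eq:Alvino}/Lemma \ref{Sobolevfract}, concentrated near the origin of $\H^n$: as the concentration parameter degenerates, the $\sinh$ weight is well approximated by its leading Euclidean behaviour, the correction terms are lower order, and the ratio of the two sides tends to $S_{n,p,q,l}^q$, showing the constant cannot be improved.

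The main obstacle I anticipate is the one-dimensional reduction in the genuinely Lorentz regime $q<p$: unlike the case $q=p$, the Lorentz quasi-norm $\|\cdot\|_{p,q}$ is not an integral of a local density of $u$ and $u'$, so the "integration by parts / ground-state substitution" that cleanly cancels $((n-1)/p)^q\|u\|_{p,q}^q$ must be performed at the level of the decreasing rearrangement and its distribution function, and one must check that the Hardy-type inequality used there is compatible with the Lorentz functional — this is exactly the technical heart shared with \cite{NguyenPS2018,Alvino1977}, and it is where the hypothesis $\frac{2n}{n-1}\le q$ (ensuring the relevant weight exponents stay in the admissible range) will be used.
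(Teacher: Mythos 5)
Your proposal is correct and follows essentially the same route as the paper: reduce via the P\'olya--Szeg\"o principle (Theorem \ref{PS}) to radial decreasing functions, perform the Hardy-type ground-state subtraction at the level of the decreasing rearrangement to peel off the sharp Poincar\'e term (this is Proposition \ref{keyprop}, relying on the pointwise lower bound for $\sinh^{q(n-1)}(F(t))$, which is where $q\geq 2n/(n-1)$ and $q\geq2$ enter), and then a power-change of variables reduces the remaining functional to the fractional-dimensional Sobolev inequality of Lemma \ref{Sobolevfract}. The sharpness argument by concentrating near the origin and comparing with the Euclidean Lorentz--Sobolev inequality is also the one the paper uses.
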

Notice that the inequality \eqref{eq:PSLorentz1} reduces to the Poincar\'e--Sobolev inequality \eqref{eq:NguyenPS} in the case $q =p$ and $l = \frac{np}{n-q}$. We can readily check that the constant $S_{n,p,q,l}$ in \eqref{eq:PSLorentz1} is sharp by the fact that the inequality 
\[
\|\na u\|_{p,q}^q \geq S_{n,p,q,l}^q \|u\|_{p^*,l}^q
\]
in $\R^n$ is sharp and scaling invariant. So it is still sharp constant for the same inequality in the ball $B_r(0)\subset \R^n$ of radius $r$ and center at origin. For $r$ sufficient small, we have $V_g \sim 2^n \mathcal L$ in $B_r(0)$ here $\mathcal L$ denotes Lebesgue's measure in $\R^n$, and $|\na_g \cdot|_g \sim \frac12 |\na \cdot|$ in $B_r(0)$. From these facts, we have 
\[
\|\na_g u\|_{p,q;\H^n}^q \sim 2^{\frac{n-p}p q} \|\na u\|_{p,q;\R^n}^q, \quad\text{\rm and}\quad \|u\|_{p^*,l;\H^n}^q \sim 2^{\frac{n-p}p q}\|u\|_{p^*,l;\R^n}^q
\]
for function $u$ with support in $B_r(0)$, here the indexes with $\H^n$ and $\R^n$ denote the Lorentz quasi-norm in $\H^n$ and $\R^n$ respectively. This implies the sharpness of $S_{n,p,q,l}$ in the Lorentz--Sobolev type inequality in $W^1 L^{p,q}(\H^n)$
\[
\|\na_g u\|_{p,q}^q \geq S_{n,p,q,l}^q \|u\|_{p^*,l}^q.
\]
So, $S_{n,p,q,l}$ is also sharp in \eqref{eq:PSLorentz1}. Thus, the inequality \eqref{eq:PSLorentz1} improves not only the sharp Poincar\'e inequality \eqref{eq:PoincareLorentz} but also the sharp Lorentz--Sobolev inequality above in the Lorentz--Sobolev space $W^1 L^{p,q}(\H^n)$.

We next move to the critical case $p =n$. Let us recall that for $1\leq p\leqs n$ and a bounded domain $\Om$ in the $\R^n$ we have the following Sobolev embedding $W^{1,p}_0(\Om) \hookrightarrow L^{p^*}(\Om)$. However, in the critical case $p=n$ (i.e., $p^* =\infty$), we don't have the embedding $W^{1,n}_0(\Om) \hookrightarrow L^{\infty}(\Om)$. Instead of the Sobolev inequality in this critical case, we have the Moser--Trudinger inequality. The Moser--Trudinger inequality was independently proved by Trudinger \cite{Trudinger67}, Yudovic \cite{Yudovic1961} and Pohozaev \cite{Pohozaev1965}. It was sharpened by Moser \cite{Moser70} in the following form
\begin{equation}\label{eq:Moser}
\sup_{u \in W^{1,n}_0(\Om), \, \int_\Om |\na u|^n dx \leq 1} \int_\Om e^{\al |u|^{\frac n{n-1}}} dx \leqs \infty,
\end{equation}
for any $\al \leq \al_n: = n \om_{n-1}^{\frac1{n-1}}$ where $\om_{n-1}$ denotes the surface area of the unit sphere in $\R^n$. Furthermore, if $\al \geqs \al_n$ then the supremum in \eqref{eq:Moser} becomes infinite though all integrals are still finite. Since its appearance, there have been many generalizations of the Moser--Trudinger inequality \eqref{eq:Moser} in many directions (see, e.g., \cite{Adams,AdimurthiDruet2004,Tintarev2014,YangSuKong,WangYe2012,AdimurthiSandeep2007,ManciniSandeep2010,AdimurthiTintarev2010,ManciniSandeepTintarev2013,CohnLu,CohnLu1,Nguyenimproved,LamLuHei,deOliveira}). Concerning to the extremals of \eqref{eq:Moser}, we refer the readers to the papers \cite{Carleson86,Flucher92,Lin96}. The Moser--Trudinger inequality \eqref{eq:Moser} was extended to unbounded domain $\Om\subset\R^n$ by Adachi and Tanaka \cite{Adachi00} in the following scaling invariant form
\begin{equation}\label{eq:AT}
\sup_{u\in W^{1,n}(\R^n), \int_{\R^n} |\na u|^n dx \leq 1} \frac{1}{\|u\|_{L^n(\R^n)}^n}\int_{\R^n} \Phi(\al |u|^{\frac n{n-1}}) dx < \infty
\end{equation}
for any $\alpha < \alpha_n$ where $\Phi(t) = e^t - \sum_{j=0}^{n-2} t^j/j!$ (i.e., the truncation of the exponential function). Notice that the critical exponent $\al_n$ is not allowed in \eqref{eq:AT}. Later, Ruf \cite{Ruf2005} and Li and Ruf \cite{LiRuf2008} established the sharp Moser--Trudinger inequality in unbounded domains under the full norm in $W^{1,n}(\R^n)$ which allows the critical exponent $\al_n$ as follows
\begin{equation}\label{eq:LiRuf}
\sup_{u\in W^{1,n}(\R^n), \|u\|_{W^{1,n}(\R^n)}^n:=\int_{\R^n}( |\na u|^n+ |u|^n) dx  \leq 1} \int_{\R^n} \Phi(\al |u|^{\frac n{n-1}}) dx < \infty
\end{equation}
for any $\alpha \leq \alpha_n$. Furthermore, the existence of maximizers for \eqref{eq:LiRuf} also is addressed in \cite{Ruf2005,LiRuf2008}. A singular version of \eqref{eq:LiRuf} was given by Adimurthi and Yang \cite{AdimurthiYang2010}. Another proof of \eqref{eq:LiRuf} without using rearrangement arguments was provided by Lam and Lu \cite{LamLunewapproach}. The Moser--Trudinger inequality in the Lorentz spaces $W^{1}L^{n,q}(\Omega), 1< q < \infty$ for a bounded domain $\Om$ was established by Alvino, Ferone and Trombetti \cite{Alvino1996} in the following form
\begin{equation}\label{eq:MoserLorentz}
\sup_{u\in W^1 L^{n,q}(\Om), \, \|\na u\|_{n,q} \leq 1} \int_{\Om} e^{\al |u|^{\frac q{q-1}}} dx < \infty
\end{equation}
for any $\al \leq \al_{n,q} :=(n^{\frac{n-1}n} \om_{n-1}^{\frac1n})^{\frac q{q-1}}$ if $1< q < \infty$. Notice that the constant $\al_{n,q}$ is sharp in \eqref{eq:MoserLorentz} in the sense that the supremum will become infinite if $\al >\al_{n,q}$. For unbounded domains in $\R^n$, the Moser--Trudinger inequality was proved by Cassani and Tarsi \cite{CassaniTarsi2009} (see Theorem $1$ and Theorem $2$ in \cite{CassaniTarsi2009}). In \cite{LuTang2016}, Lu and Tang proved several sharp singular Moser--Trudinger inequalities in the Lorentz--Sobolev spaces which generalize the results in \cite{Alvino1996,CassaniTarsi2009} to the singular weights.

In the hyperbolic space $\H^n$, the Moser--Trudinger inequality was firstly proved by Mancini and Sandeep \cite{ManciniSandeep2010} in the dimension $n =2$ (another proof of this result was given by Adimurthi and Tintarev \cite{AdimurthiTintarev2010}) and by Mancini, Sandeep and Tintarev \cite{ManciniSandeepTintarev2013} in higher dimension $n\geq 3$ (see \cite{FontanaMorpurgo2020} for an alternative proof)
\begin{equation}\label{eq:MThyperbolic}
\sup_{u\in W^{1,n}(\H^n),\, \int_{\H^n} |\na_g u|_g^n dV_g \leq 1} \int_{\H^n} \Phi(\al_n |u|^{\frac n{n-1}}) dV_g < \infty.
\end{equation}
Lu and Tang \cite{LuTang2013} also established the sharp singular Moser--Trudinger inequality under the conditions $\|\na u\|_{L^n(\H^n)}^n + \tau \|u\|_{L^n(\H^n)}^n \leq 1$ for any $\tau >0$ (see Theorem $1.4$ in \cite{LuTang2013}). In \cite{NguyenMT2018}, the author improves the inequality \eqref{eq:MThyperbolic} by proving the following inequality 
\begin{equation}\label{eq:NguyenMT}
\sup_{u\in W^{1,n}(\H^n),\, \int_{\H^n} |\na_g u|_g^n dV_g - \lam \int_{\H^n} |u|^n dV_g \leq 1} \int_{\H^n} \Phi(\al_n |u|^{\frac n{n-1}}) dV_g < \infty,
\end{equation}
for any $\lambda < (\frac{n-1}n)^n$.

To our knowledge, much less is known about the sharp Trudinger--Moser inequalities under Lorentz norm on complete noncompact Riemannian manifolds except Euclidean spaces. Recently, Yang and Li \cite{YangLi2019} proves a sharp Moser--Trudinger inequality in the Lorentz--Sobolev spaces defined in the hyperbolic spaces. More precisely, their result (\cite[Theorem $1.6$]{YangLi2019}) states that for $1\leqs q \leqs \infty$  it holds
\begin{equation}\label{eq:YangLi}
\sup_{u\in W^1L^{n,q}(\H^n),\, \|\na_g u\|_{n,q} \leq 1} \int_{\H^n} \Phi_q(\al_{n,q} |u|^{\frac q{q-1}}) dV_g \leqs \infty,
\end{equation}
where
\[
\Phi_q(t) =e^t - \sum_{j=0}^{j_q -2} \frac{t^j}{j!},\quad \text{\rm where}\,\, j_q = \min\{j\in \N\, :\, j \geq 1+ n(q-1)/q\}.
\]
The third (and last) aim in this paper is to establish an analogue of \eqref{eq:NguyenMT} in the Lorentz--Sobolev space $W^1 L^{n,q}(\H^n)$ and hence give an improvement of the inequality of Yang and Li. Our next result provides such an analogue of \eqref{eq:NguyenMT} and is stated as follows.

\begin{theorem}\label{3rdMT}
Let $n\geq 4$ and $\frac{2n}{n-1} \leq q \leq n$. Then we have
\begin{equation}\label{eq:improvedMTLorentz}
\sup_{u\in W^1L^{n,q}(\H^n),\, \|\na_g u\|_{n,q}^q -\lam \|u\|_{n,q}^q \leq 1} \int_{\H^n} \Phi_q\big(\al_{n,q} |u|^{\frac q{q-1}}\big) dV_g \leqs \infty,
\end{equation}
for any $\lam \leqs (\frac{n-1}n)^q$.
\end{theorem}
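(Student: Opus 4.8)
Here is how I would attack Theorem \ref{3rdMT}.

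The plan is to follow the strategy behind \cite{NguyenMT2018,YangLi2019}: reduce to one dimension by symmetrization, split the resulting integral into a central and a peripheral piece, and use the strict gap $\lam<(\tfrac{n-1}{n})^q$ to show that on a small geodesic ball the gradient energy stays strictly below the Moser threshold. First I would invoke the P\'olya--Szeg\"o principle in $W^1L^{n,q}(\H^n)$ proved in this paper: replacing $u$ by its hyperbolic decreasing rearrangement does not increase $\|\na_g u\|_{n,q}$ and preserves $\|u\|_{n,q}$ and $\int_{\H^n}\Phi_q(\al_{n,q}|u|^{q/(q-1)})\,dV_g$, so I may assume $u=u(r)$ radial and non-increasing in the geodesic distance $r$. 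Let $\Psi(r)=\om_{n-1}\int_0^r(\sinh\rho)^{n-1}\,d\rho$ be the volume of the ball of radius $r$, $P(s)=\Psi'(\Psi^{-1}(s))$ the perimeter of the ball of volume $s$, and $\phi=u^*$, so $\phi(s)=u(\Psi^{-1}(s))$. The Hardy--Littlewood inequality, applied to $s\mapsto(-\phi'(s))P(s)$ against the decreasing weight $s^{q/n-1}$, gives
\[
\|\na_g u\|_{n,q}^q\ \ge\ \int_0^\infty s^{q/n-1}(-\phi'(s))^q P(s)^q\,ds\,,\qquad \|u\|_{n,q}^q=\int_0^\infty s^{q/n-1}\phi(s)^q\,ds\,,
\]
while equimeasurability gives $\int_{\H^n}\Phi_q(\al_{n,q}|u|^{q/(q-1)})\,dV_g=\int_0^\infty\Phi_q(\al_{n,q}\phi(s)^{q/(q-1)})\,ds$. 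Since $\lam<(\tfrac{n-1}{n})^q$, combining the constraint with Theorem \ref{Poincare} (case $p=n$) bounds $\|u\|_{n,q}^q$ by a constant depending only on $n,q,\lam$; hence, by the embedding $L^{n,q}\hookrightarrow L^n$, so is $\|u\|_{L^n(\H^n)}^n$, and by Chebyshev's inequality so is $\phi(s_0)$ for each fixed $s_0>0$.

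Next, fix a small $r_0>0$ (to be chosen) and put $s_0=\Psi(r_0)$. On $\H^n\setminus B_{r_0}$ one has $u\le M:=\phi(s_0)$, a bounded quantity. Since $j_q\ge1+n(q-1)/q$, the lowest-order term of $\Phi_q$ has order $\tfrac{q}{q-1}(j_q-1)\ge n$, so $\Phi_q(\al_{n,q}\tau^{q/(q-1)})\le C\tau^n$ for $0\le\tau\le\ep_0$ and $\le C(M)$ for $0\le\tau\le M$; consequently
\[
\int_{\H^n\setminus B_{r_0}}\Phi_q(\al_{n,q}|u|^{q/(q-1)})\,dV_g\ \le\ C\|u\|_{L^n(\H^n)}^n+C(M)\,V_g(\{|u|>\ep_0\})\,,
\]
and $V_g(\{|u|>\ep_0\})\le C\ep_0^{-n}\|u\|_{n,q}^n$ by Chebyshev; all the right-hand terms are uniformly bounded. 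So the peripheral piece is harmless.

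It remains to bound $\int_0^{s_0}\Phi_q(\al_{n,q}\phi(s)^{q/(q-1)})\,ds$ uniformly; this is where the improvement over \cite{YangLi2019} enters. From $u(r_0)=\int_{r_0}^\infty(-u')$ and H\"older's inequality (using $\int_{r_0}^\infty\big((\Psi^{q/n})'(r)\big)^{-1/(q-1)}\,dr<\infty$) one gets $u(r_0)^q\le\delta_0(r_0)\int_{s_0}^\infty s^{q/n-1}(-\phi')^qP^q\,ds$ with $\delta_0(r_0)\to0$ as $r_0\to0$; together with a localized version of Theorem \ref{Poincare} on the exterior domain $\{r>r_0\}$ (whose sharp constant tends to $(\tfrac{n-1}{n})^q$ as $r_0\to0$), this yields, for $r_0$ small enough, a constant $\beta=\beta(r_0)>\lam$ with $\int_{s_0}^\infty s^{q/n-1}(-\phi')^qP^q\,ds\ge\beta\|u\|_{n,q}^q$. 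Subtracting this from the constraint and using the lower bound for $\|\na_g u\|_{n,q}^q$,
\[
\int_0^{s_0}s^{q/n-1}(-\phi'(s))^q P(s)^q\,ds\ \le\ 1-(\beta-\lam)\|u\|_{n,q}^q\ =:\ 1-\kappa\,,\qquad\kappa>0\,.
\]
Now $P(s)\ge n\sigma_n^{1/n}s^{(n-1)/n}$ (the hyperbolic isoperimetric profile dominates the Euclidean one), so with $t=\log(\sigma_n/s)$ and $\psi(t)=\phi(\sigma_n e^{-t})$ the last display becomes $\int_{t_0}^\infty|\dot\psi(t)|^q\,dt\le\al_{n,q}^{-(q-1)}(1-\kappa)$, while $\int_0^{s_0}\Phi_q(\al_{n,q}\phi^{q/(q-1)})\,ds=\sigma_n e^{-t_0}\int_{t_0}^\infty\Phi_q(\al_{n,q}\psi(t)^{q/(q-1)})e^{-(t-t_0)}\,dt$ and $\psi(t_0)=M$. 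Writing $\psi(t)=v(t-t_0)+M$ with $v(0)=0$, $v\ge0$ non-decreasing, and splitting $(v+M)^{q/(q-1)}\le(1+\vartheta)v^{q/(q-1)}+C_\vartheta M^{q/(q-1)}$ with $\vartheta=(1-\kappa)^{-1/(q-1)}-1$ — so that the leading exponent in the $v$-term is \emph{exactly} $\al_{n,q}$ — one reduces to the sharp one-dimensional Moser--Trudinger inequality with truncated exponential. The crucial cancellation is that, in the sharp form of the Young-type splitting, $C_\vartheta\lesssim\kappa^{-1/(q-1)}$, while $M^q\lesssim\|u\|_{n,q}^q$ and $\kappa=(\beta-\lam)\|u\|_{n,q}^q$, so that $C_\vartheta M^{q/(q-1)}\lesssim(\beta-\lam)^{-1/(q-1)}$ is a constant independent of $u$; this gives the desired uniform bound and, with the peripheral estimate, finishes the proof.

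The delicate point is this last step: one must carry the \emph{full} critical constant $\al_{n,q}$ (not a subcritical surrogate obtained by crude rescaling) through the estimate, which rests on the exact matching between the energy deficit $\kappa$ on the central ball and the trace size $M=\phi(s_0)$ — the quantitative heart of the concentration analysis of \cite{NguyenMT2018,YangLi2019}. The restrictions $n\ge4$ and $\tfrac{2n}{n-1}\le q\le n$ are precisely those under which the P\'olya--Szeg\"o principle and the auxiliary one-dimensional inequalities invoked above are available.
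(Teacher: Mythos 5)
Your approach is a genuinely different route from the paper's and, as written, it has a real gap at its central step. The paper does not decompose the domain into a small ball and its complement. Instead it relies on Proposition~\ref{keyprop}, an \emph{algebraic} decomposition of the gradient energy: with $v(r)=u^*(\sigma_n r^n)$, the elementary inequality $(b-a)^q\ge b^q+|a|^q-qab^{q-1}$ gives
\[
\|\na_g u\|_{n,q}^q-\Big(\tfrac{n-1}{n}\Big)^q\|u\|_{n,q}^q\ \ge\ n\sigma_n^{q/n}\int_0^\infty|v'(r)|^q r^{q-1}\,dr .
\]
Writing $\tau=(\tfrac{n-1}{n})^q-\lam>0$, the constraint then yields $\|w'\|_{L^q_q}^q+\tau\|w\|_{L^q_q}^q\le 1$ for a suitable normalization $w$ of $v$, and one finishes by the fractional-dimensional Moser--Trudinger estimate of Lemma~\ref{MT} together with the $L^n$-bound from \cite[Lemma~3.2]{YangLi2019}. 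No ball splitting is needed, and there is no surgery near the critical exponent.

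The flaw in your version is the claim that, for $r_0$ small, a localized Poincar\'e inequality on the exterior plus the boundary-value estimate gives a constant $\beta>\lam$ with
\[
\int_{s_0}^\infty s^{q/n-1}(-\phi'(s))^q P(s)^q\,ds\ \ge\ \beta\,\|u\|_{n,q}^q .
\]
This cannot hold uniformly: if $u$ is supported in $B_{r_0}$ then $\phi$ is supported in $(0,s_0)$, the left-hand side vanishes identically, while $\|u\|_{n,q}^q>0$ is free. The exterior gradient energy simply cannot control the \emph{full} $L^{n,q}$-norm; at best it controls the exterior part of the norm, and the central part $\int_0^{s_0}\phi^q s^{q/n-1}\,ds$ can be made arbitrarily large relative to the exterior energy (subject to the constraint). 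So $\kappa=(\beta-\lam)\|u\|_{n,q}^q$ is not available, and the ``crucial cancellation'' you invoke at the end never gets started. In addition, the statement that $\delta_0(r_0)\to0$ as $r_0\to0$ in the H\"older estimate for $u(r_0)$ is backwards: near $s=0$ one has $s^{q/n-1}P(s)^q\sim c\,s^{q-1}$, so the conjugate integral $\int_{s_0}^{1}\big(s^{q/n-1}P(s)^q\big)^{-1/(q-1)}\,ds\sim\int_{s_0}^{1}s^{-1}\,ds=-\ln s_0$ \emph{diverges} as $r_0\to0$; thus $\delta_0(r_0)\to\infty$, not $0$. To repair your program you would have to take $r_0$ \emph{large} and then argue differently about the behaviour near the origin, which is essentially what Proposition~\ref{keyprop} plus Lemma~\ref{MT} accomplish without any spatial splitting.
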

Notice that the inequality \eqref{eq:NguyenMT} is a special case of \eqref{eq:improvedMTLorentz} corresponding to the case $q =n$. Obviously, the inequality \eqref{eq:improvedMTLorentz} improves the result of Yang and Li \eqref{eq:YangLi}. However, comparing with the result of Yang and Li, we need impose an extra condition $q \leq n$. This condition is necessary to apply the rearrangement argument. We shall discuss, in details, about this point below.

We conclude this introduction by some comments on the proof of our main results (i.e., Theorems \ref{Poincare}, \ref{PoincareSobolev}, \ref{3rdMT}). To prove our main results, we adopt and develop the approach in \cite{NgoNguyenAMV,NguyenMT2018,NguyenPS2018}. Our approach heavily relies on the rearrangement argument applied to the hyperbolic spaces $\H^n$. In order to apply this argument, we need impose the condition $q \leq p$ (or $q \leq n$) in our main results. In fact, under this extra condition, we shall prove a P\'olya--Szeg\"o type principle in the Lorentz--Sobolev spaces $W^1L^{p,q}(\H^n)$ (see Theorem \ref{PS} below)
\[
\|\na_g u^\sharp\|_{p,q} \leq \|\na_g u\|_{p,q},\quad u\in W^1L^{p,q}(\H^n)
\]
for $1\leq q \leq p$, where $u^\sharp$ denotes the radially symmetric non-increasing rearrangement function of $u$ (see Section \S2 below for details). By this result, we can reduce the proof of Theorems \ref{Poincare}, \ref{PoincareSobolev} and \ref{3rdMT} to radially symmetric non-increasing functions in $\H^n$. For such functions, we prove a key estimate in Proposition \ref{keyprop} which is crucial in the proofs of Theorems \ref{PoincareSobolev} and \ref{3rdMT}. The detail proofs of these theorems are given in Sections \S3, \S4 and \S5 below. Finally, it is worth to mention here that the main results in this paper are recently extended to the higher order Lorentz--Sobolev spaces defined in the hyperbolic spaces by the author in \cite{Nguyen2020,Nguyen2020Adams}.

The rest of this paper is organized as follows. In the next section, we give some basses on the hyperbolic spaces $\H^n$ and the Lorentz--Sobolev spaces $W^1 L^{p,q}(\H^n)$, and prove the P\'olya--Szeg\"o type principle in $W^1 L^{p,q}(\H^n)$. The proof of Theorem \ref{Poincare} is given in Section \S3. Theorem \ref{PoincareSobolev} is proved in Section \S4. Finally , we prove Theorem \ref{3rdMT} in Section \S5.

\section{Preliminaries}
We start this section by briefly recalling some basis facts on the hyperbolic spaces and the Lorentz--Sobolev space defined in the hyperbolic spaces. Let $n\geq 2$, a hyperbolic space of dimension $n$ (denoted by $\H^n$) is a complete , simply connected Riemannian manifold having constant sectional curvature $-1$. There are several models for the hyperbolic space $\H^n$ such as the half-space model, the hyperboloid (or Lorentz) model and the Poincar\'e ball model. Notice that all these models are Riemannian isometry. In this paper, we are interested in the Poincar\'e ball model of the hyperbolic space since this model is very useful for questions involving rotational symmetry. In the Poincar\'e ball model, the hyperbolic space $\H^n$ is the open unit ball $B_n\subset \R^n$ equipped with the Riemannian metric
\[
g(x) = \Big(\frac2{1- |x|^2}\Big)^2 dx \otimes dx.
\]
The volume element of $\H^n$ with respect to the metric $g$ is given by
\[
dV_g(x) = \Big(\frac 2{1 -|x|^2}\Big)^n dx,
\]
where $dx$ is the usual Lebesgue measure in $\R^n$. For $x \in B_n$, let $d(0,x)$ denote the geodesic distance between $x$ and the origin, then we have $d(0,x) = \ln (1+|x|)/(1 -|x|)$. For $\rho \geqs 0$, $B(0,\rho)$ denote the geodesic ball with center at origin and radius $\rho$. If we denote by $\na$ and $\Delta$ the Euclidean gradient and Euclidean Laplacian, respectively as well as $\la \cdot, \cdot\ra$ the standard scalar product in $\R^n$, then the hyperbolic gradient $\na_g$ and the Laplace--Beltrami operator $\Delta_g$ in $\H^n$ with respect to metric $g$ are given by
\[
\na_g = \Big(\frac{1 -|x|^2}2\Big)^2 \na,\quad \Delta_g = \Big(\frac{1 -|x|^2}2\Big)^2 \Delta + (n-2) \Big(\frac{1 -|x|^2}2\Big)\la x, \na \ra,
\]
respectively. For a function $u$, we shall denote $\sqrt{g(\na_g u, \na_g u)}$ by $|\na_g u|_g$ for simplifying the notation. Finally, for a radial function $u$ (i.e., the function depends only on $d(0,x)$) we have the following polar coordinate formula
\begin{equation}\label{eq:polar}
\int_{\H^n} u(x) dx = n \sigma_n \int_0^\infty u(\rho) \sinh^{n-1}(\rho)\,  d\rho.
\end{equation}

It is now known that the symmetrization argument works well in the setting of the hyperbolic. It is the key tool in the proof of several important inequalities such as the Poincar\'e inequality, the Sobolev inequality, the Moser--Trudinger inequality in $\H^n$. We shall see that this argument is also the key tool to establish the main results in the present paper. Let us recall some facts about the rearrangement argument in the hyperbolic space $\H^n$. A measurable function $u:\H^n \to \R$ is called vanishing at the infinity if for any $t >0$ the set $\{|u| > t\}$ has finite $V_g-$measure, i.e.,
\[
V_g(\{|u|> t\}) = \int_{\{|u|> t\}} dV_g < \infty.
\]
For such a function $u$, its distribution function is defined by
\[
\mu_u(t) = V_g( \{|u|> t\}).
\]
Notice that $t \to \mu_u(t)$ is non-increasing and right-continuous. The non-increasing rearrangement function $u^*$ of $u$ is defined by
\[
u^*(t) = \sup\{s > 0\, :\, \mu_u(s) > t\}.
\] 
The non-increasing, spherical symmetry, rearrangement function $u^\sharp$ of $u$ is defined by
\[
u^\sharp(x) = u^*(V_g(B(0,d(0,x)))),\quad x \in \H^n.
\]
It is well-known that $u$ and $u^\sharp$ have the same non-increasing rearrangement function (which is $u^*$). Finally, the maximal function $u^{**}$ of $u^*$ is defined by
\[
u^{**}(t) = \frac1t \int_0^t u^*(s) ds.
\]
Evidently, $u^*(t) \leq u^{**}(t)$.

For $1\leq p, q < \infty$, the Lorentz space $L^{p,q}(\H^n)$ is defined as the set of all measurable function $u: \H^n \to \R$ satisfying
\[
\|u\|_{L^{p,q}(\H^n)}: = \lt(\int_0^\infty \lt(t^{\frac1p} u^*(t)\rt)^q \frac{dt}t\rt)^{\frac1q} < \infty.
\]
It is clear that $L^{p,p}(\H^n) = L^p(\H^n)$. Moreover, the Lorentz spaces are monotone with respect to second exponent, namely
\[
L^{p,q_1}(\H^n) \subsetneq L^{p,q_2}(\H^n),\quad 1\leq q_1 < q_2 < \infty.
\]
The functional $ u\to \|u\|_{L^{p,q}(\H^n)}$ is not a norm in $L^{p,q}(\H^n)$ except the case $q \leq p$ (see \cite[Chapter $4$, Theorem $4.3$]{Bennett}). In general, it is a quasi-norm which  turns out to be equivalent to the norm obtained replacing $u^*$ by its maximal function $u^{**}$ in the definition of $\|\cdot\|_{L^{p,q}(\H^n)}$. Moreover, as a consequence of Hardy inequality, we have
\begin{proposition}\label{hardy}
Given $p\in (1,\infty)$ and $q \in [1,\infty)$. Then for any function $u \in L^{p,q}(\H^n)$ it holds 
\begin{equation}\label{eq:Hardy}
\lt(\int_0^\infty \lt(t^{\frac1p} u^{**}(t)\rt)^q \frac{dt}t\rt)^{\frac1q} \leq \frac p{p-1} \lt(\int_0^\infty \lt(t^{\frac1p} u^*(t)\rt)^q \frac{dt}t\rt)^{\frac1q} = \frac p{p-1} \|u\|_{L^{p,q}(\H^n)}.
\end{equation}
\end{proposition}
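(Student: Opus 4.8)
The plan is to recognize \eqref{eq:Hardy} as a weighted Hardy inequality on the half-line and prove it directly by an averaging trick combined with Minkowski's integral inequality. Since $u\in L^{p,q}(\H^n)$, the rearrangement $u^*$ is a non-negative, non-increasing (hence measurable) function on $(0,\infty)$ that is finite almost everywhere, so every integral below is well-defined, and the right-hand side of \eqref{eq:Hardy} is finite by hypothesis.

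First I would rewrite the maximal function as an average over the \emph{fixed} interval $(0,1)$: the substitution $s=t\tau$ gives
\[
u^{**}(t)=\frac1t\int_0^t u^*(s)\,ds=\int_0^1 u^*(t\tau)\,d\tau,
\]
so that
\[
t^{\frac1p}u^{**}(t)=\int_0^1 \tau^{-\frac1p}\,\big((t\tau)^{\frac1p}u^*(t\tau)\big)\,d\tau .
\]
Next I would take the $L^q\big((0,\infty),\tfrac{dt}{t}\big)$ quasi-norm of both sides and apply Minkowski's integral inequality (valid because $q\geq1$ and the integrand is non-negative) to move the norm inside the $d\tau$-integral:
\[
\lt(\int_0^\infty \big(t^{\frac1p}u^{**}(t)\big)^q\frac{dt}t\rt)^{\frac1q}
\leq \int_0^1 \tau^{-\frac1p}\lt(\int_0^\infty \big((t\tau)^{\frac1p}u^*(t\tau)\big)^q\frac{dt}t\rt)^{\frac1q}d\tau .
\]
For the inner integral the change of variables $r=t\tau$ leaves $\tfrac{dt}{t}=\tfrac{dr}{r}$ invariant, so it equals $\|u\|_{L^{p,q}(\H^n)}^q$ independently of $\tau$; hence the right-hand side becomes $\big(\int_0^1 \tau^{-1/p}\,d\tau\big)\,\|u\|_{L^{p,q}(\H^n)}=\frac{p}{p-1}\,\|u\|_{L^{p,q}(\H^n)}$, the integral $\int_0^1\tau^{-1/p}\,d\tau=\frac{p}{p-1}$ converging precisely because $p>1$. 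This is \eqref{eq:Hardy}, and the final equality in the statement is just the definition of the Lorentz quasi-norm.

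I do not expect a serious obstacle here; the only points deserving a word of care are the legitimacy of Minkowski's integral inequality (which follows from Tonelli's theorem, the integrand being non-negative and jointly measurable in $(t,\tau)$) and the role of the hypothesis $p>1$, without which the constant $\frac{p}{p-1}$ would be infinite and the inequality would fail in general.
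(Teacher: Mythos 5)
Your proof is correct. The paper itself gives no argument for this proposition; it simply states that the result holds ``as a consequence of Hardy inequality,'' referring to the classical weighted Hardy inequality on the half-line,
\[
\int_0^\infty \Big(\frac1t\int_0^t f(s)\,ds\Big)^q t^{\frac qp -1}\,dt \;\leq\; \Big(\frac{p}{p-1}\Big)^q \int_0^\infty f(t)^q\,t^{\frac qp -1}\,dt,\qquad p>1,\ q\geq 1,\ f\geq 0,
\]
applied to $f=u^*$. Your argument is a self-contained proof of precisely that inequality: writing $u^{**}(t)=\int_0^1 u^*(t\tau)\,d\tau$, applying Minkowski's integral inequality, and using the dilation invariance of $dt/t$ to decouple the $\tau$-integral. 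This is one of the standard routes to the weighted Hardy inequality, so it is the same mathematics, just unpacked; what you gain is transparency (the constant $\tfrac{p}{p-1}=\int_0^1\tau^{-1/p}\,d\tau$ appears for a visible reason) and clarity about exactly where $p>1$ and $q\geq 1$ are used. All the steps you flag as needing care — joint measurability and nonnegativity for Minkowski/Tonelli, convergence of $\int_0^1\tau^{-1/p}\,d\tau$ — are handled correctly, and the change of variables $r=t\tau$ preserving $dt/t$ is exactly right.
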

For $1\leq p, q \leqs \infty$, we define the first order Lorentz--Sobolev space $W^1L^{p,q}(\H^n)$ by taking the completion of $C_0^\infty(\H^n)$ under the quasi-norm
\[
\|\na_g u\|_{p,q} := \| |\na_g u|_g\|_{p,q}.
\]
It is obvious that $W^1L^{p,p}(\H^n) = W^{1,p}(\H^n)$ the first order Sobolev space in $\H^n$. The P\'olya--Szeg\"o principle in the hyperbolic spaces asserts that if $u\in W^{1,p}(\H^n)$ then $u^\sharp \in W^{1,p}(\H^n)$ and 
$$\int_{\H^n} |\na_g u^\sharp|_g^p dV_g \leq \int_{\H^n} |\na_g u|_g^p dV_g.$$
This principle is very useful to find the sharp constant in several inequalities concerning to the $L^p$ norm of hyperbolic gradient. In the next result, we extend the P\'olya--Szeg\"o principle to the Lorenz--Sobolev spaces $W^1L^{p,q}(\H^n)$.
\begin{theorem}\label{PS}
Let $n\geq 2$, $1\leq q \leq p \leqs \infty$ and $u\in W^{1}L^{p,q}(\H^n)$. Then $u^\sharp \in W^{1}L^{p,q}(\H^n)$ and 
$$\|\na_g u^\sharp\|_{p,q} \leq \|\na_g u\|_{p,q}.$$
\end{theorem}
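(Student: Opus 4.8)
The plan is to reduce the Lorentz-norm statement to a pointwise (in the layer-cake variable) comparison between the rearranged gradient and the original gradient, exactly as one does for the classical $L^p$ Pólya–Szegő principle, and then integrate against the correct weight. First I would recall the co-area formula in $\H^n$: for $u\in C_0^\infty(\H^n)$ with $\mu=\mu_u$ its distribution function, one has $-\mu'(t)=\int_{\{|u|=t\}}\frac{d\mathcal H^{n-1}_g}{|\na_g u|_g}$ for a.e. $t$, together with the hyperbolic isoperimetric inequality: among all sets of given $V_g$-measure, geodesic balls minimize the perimeter $P_g$. Writing $\Phi(s)$ for the perimeter of a geodesic ball of volume $s$ (so $\Phi(s)=n\sigma_n\sinh^{n-1}(\rho(s))$ where $V_g(B(0,\rho(s)))=s$), the isoperimetric inequality gives $P_g(\{|u|>t\})\ge \Phi(\mu(t))$. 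Combining this with Cauchy–Schwarz (or rather the Hölder-type bound $P_g(\{|u|>t\})^2\le\big(\int_{\{|u|=t\}}|\na_g u|_g\,d\mathcal H^{n-1}_g\big)\cdot(-\mu'(t))$) and the standard identity $-\frac{d}{dt}\int_{\{|u|>t\}}|\na_g u|_g\,dV_g=\int_{\{|u|=t\}}|\na_g u|_g\,d\mathcal H^{n-1}_g$, one arrives at the fundamental differential inequality
\[
-\frac{d}{dt}\int_{\{|u|>t\}}|\na_g u|_g\,dV_g\ \ge\ \frac{\Phi(\mu(t))^2}{-\mu'(t)}\ \ge\ \Phi(\mu(t))^2\,\big(-(u^*)'(\mu(t))\big)^{-1}\cdot(\text{chain rule bookkeeping}),
\]
which, after the substitution $s=\mu(t)$ and recalling that $|\na_g u^\sharp|_g$ at the point where $V_g(B)=s$ equals $-(u^*)'(s)\,\Phi(s)/(n\sigma_n\sinh^{n-1}\rho)= -(u^*)'(s)\,\Phi(s)$ divided by the appropriate Jacobian, yields the decisive \emph{pointwise} bound
\[
\big(|\na_g u^\sharp|_g\big)^*(s)\ \le\ \Phi(s)\,\Big(-\frac{d}{ds}\big(\text{"distribution of }|\na_g u|\text{"}\big)\Big)\quad\text{in the averaged sense},
\]
i.e. for every $s>0$,
\[
\Phi(s)\,\big(-(u^*)'(s)\big)\ \le\ \big(|\na_g u|_g\big)^{**}\!\big(\Phi\text{-weighted version of }s\big).
\]
More cleanly: I would follow the route that produces, for all $r>0$,
\[
\int_0^r \big(|\na_g u^\sharp|_g\big)^*(s)^?\,ds\ \le\ \int_0^r \big(|\na_g u|_g\big)^*(s)^?\,ds,
\]
a Hardy–Littlewood–Pólya majorization between the decreasing rearrangements of $|\na_g u^\sharp|_g$ and $|\na_g u|_g$ — this is the heart of the matter.

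The key steps, in order: (1) by density it suffices to treat $u\in C_0^\infty(\H^n)$ and, truncating, nonnegative $u$; (2) establish the differential inequality above using co-area plus the hyperbolic isoperimetric inequality; (3) convert it into the majorization $\int_0^r(|\na_g u^\sharp|_g)^*\le\int_0^r(|\na_g u|_g)^*$ for every $r$ (equivalently $(|\na_g u^\sharp|_g)^{**}\le(|\na_g u|_g)^{**}$ pointwise); (4) invoke the Hardy–Littlewood–Pólya principle: if $f^{**}\le g^{**}$ everywhere then $\|f\|_{p,q}\le\|g\|_{p,q}$ whenever $1\le q\le p$, because in that range the Lorentz functional is (equivalent to, and for the ordered pair controlled by) a rearrangement-monotone norm built from $f^{**}$; concretely one uses that $t\mapsto t^{1/p}f^{**}(t)$ is handled by Proposition \ref{hardy} and that $t^{1/p-1}$ has the right monotonicity for the Hardy operator to be order-preserving on the cone of decreasing functions when $q\le p$. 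Combining (3) and (4) gives $\|\,|\na_g u^\sharp|_g\,\|_{p,q}\le\|\,|\na_g u|_g\,\|_{p,q}$, which is the claim; that $u^\sharp\in W^1L^{p,q}(\H^n)$ then follows since the right side is finite and $u^\sharp$ is a legitimate (Lipschitz on compacta, compactly supported) competitor approximable in the quasi-norm.

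I expect the main obstacle to be step (4), the passage from the pointwise maximal-function inequality to the Lorentz \emph{quasi}-norm inequality: the functional $\|\cdot\|_{p,q}$ is only a quasi-norm when $q>p$, but here we are in the favorable regime $q\le p$, and the real subtlety is that one must not lose a constant — Theorem \ref{PS} asserts the inequality with constant one. So rather than passing through the norm-equivalence with $u^{**}$ (which costs the factor $p/(p-1)$ from Proposition \ref{hardy}), I would prove the majorization $\int_0^r(|\na_g u^\sharp|_g)^*\,ds\le\int_0^r(|\na_g u|_g)^*\,ds$ directly at the level of rearrangements and then use the sharp fact that for $1\le q\le p<\infty$ the map $h\mapsto\big(\int_0^\infty(t^{1/p}h(t))^q\,dt/t\big)^{1/q}$, restricted to nonnegative decreasing $h$, is monotone with respect to the majorization order $\int_0^r h_1\le\int_0^r h_2\ \forall r$ — this is exactly a Hardy-type inequality with the weight $t^{q/p-1}$, whose non-increasing primitive weight $t^{q/p}$ (since $q\le p$) makes the relevant operator order-preserving. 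The secondary technical point is justifying the co-area and boundary-regularity manipulations for the possibly non-smooth level sets of $u$; this is standard (Sard's theorem gives a.e.-regular values, and the isoperimetric/co-area machinery in $\H^n$ is well documented), so I would cite it rather than reprove it. The one genuinely hyperbolic ingredient — the exact form of $\Phi(s)=n\sigma_n\sinh^{n-1}(\rho(s))$ with $s=n\sigma_n\int_0^{\rho(s)}\sinh^{n-1}\tau\,d\tau$ from the polar-coordinate formula \eqref{eq:polar} — is what makes $|\na_g u^\sharp|_g$ identifiable and the comparison clean.
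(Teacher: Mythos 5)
Your roadmap matches the paper's: reduce by density to nonnegative $u \in C_0^\infty(\H^n)$, use co-area plus the hyperbolic isoperimetric inequality to obtain a Hardy--Littlewood--P\'olya majorization $\int_0^r (|\na_g u^\sharp|_g)^*\,ds \le \int_0^r(|\na_g u|_g)^*\,ds$, and then invoke the fact that for $1\le q\le p$ the functional $h\mapsto \int_0^\infty (t^{1/p}h)^q\,dt/t$ is monotone under the majorization order on decreasing functions (the paper's reference for this is Proposition~1 of Cassani--Tarsi, equivalently Corollary~2.1 of Alvino--Lions--Trombetti). You correctly flag the constant-one subtlety and correctly avoid the lossy detour through $u^{**}$ and Proposition~\ref{hardy}; that final step is sound.

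The gap is in the step you rightly call the heart of the matter --- producing the majorization. Two specific problems. First, the ``standard identity'' you quote is wrong: co-area with $f=|\na_g u|_g$ gives
\[
-\frac{d}{dt}\int_{\{u>t\}}|\na_g u|_g\,dV_g=\int_{\{u=t\}}d\mathcal H^{n-1}_g=P_g(\{u>t\}),
\]
with no factor $|\na_g u|_g$ on the right; the identity you wrote is the one for $f=|\na_g u|_g^2$. Second, the Cauchy--Schwarz/Talenti differential inequality $P_g^2\le\big(\int_{\{u=t\}}|\na_g u|_g\,d\mathcal H^{n-1}_g\big)(-\mu')$ is the $L^2$-flux comparison; it yields the classical $L^p$ P\'olya--Szeg\"o via convexity tricks but does \emph{not} produce a pointwise rearrangement majorization, which is what the Lorentz norm with general $(p,q)$ requires. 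The paper sidesteps Cauchy--Schwarz entirely by introducing the Alvino averaging function $U$ defined through the $L^1$ layer-cake identity $\int_0^{\mu_u(t)}U(s)\,ds=\int_{\{u>t\}}|\na_g u|_g\,dV_g$. This gives two things at once: (i) $U\prec|\na_g u|_g$ automatically by construction (an $L^1$ averaging fact, not Cauchy--Schwarz); and (ii) differentiating the defining identity and applying the corrected co-area formula plus isoperimetry yields $-U(\mu_u(t))\mu_u'(t)=P_g(\{u>t\})\ge n\sigma_n\sinh^{n-1}(F(\mu_u(t)))$, hence $-(u^*)'(s)\le U(s)/(n\sigma_n\sinh^{n-1}(F(s)))$, which is precisely $|\na_g u^\sharp|_g\le U$ at corresponding radii. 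Combining (i) and (ii) gives $(|\na_g u^\sharp|_g)^*\prec(|\na_g u|_g)^*$. So replace the Cauchy--Schwarz step with the $U$-construction, fix the co-area identity, and your proposal becomes the paper's proof.
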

\begin{proof}
Since $(|u|)^\sharp = u^\sharp$ and $|\na_g |u||_g \leq |\na_g u|_g$, hence it is enough to prove Theorem \ref{PS} for nonnegative function $u\in W^1L^{p,q}(\H^n)$. Moreover, by the density, it is sufficient to assume that $u \in C_0^\infty(\H^n)$.
 
Let $u$ be a nonnegative function in $C_0^\infty(\H^n)$ with support $\Om\subset \H^n$. Following \cite{Alvino1989} we consider a function $U$ built from $|\na_g u|_g$ on the level sets of $u$, i.e., 
\begin{equation}\label{eq:builtU}
\int_{\{u > t\}} |\na_g u|_g dV_g = \int_0^{V_g(\{u> t\})} U(s) ds,
\end{equation}
for any $t >0$. Notice that $U \prec |\na_g u|_g$ in the sense that
\[
\int_0^t U^*(s) ds \leq \int_0^t (|\na_g u|_g)^*(s) ds,\quad \forall\, t \in [0, V_g(\Om)),
\]
and 
\[
\int_0^{V_g(\Om)} U^*(s) ds \leq \int_0^{V_g(\Om)} (|\na_g u|_g)^*(s) ds.
\]
From \eqref{eq:builtU}, we have by differentiating in $t$,
\[
-U(\mu_u(t)) \mu_u'(t) = \int_{\{u=t\}} \lt(\frac 2{1-|x|^2}\rt)^{n-1} d\mathcal H^{n-1}(x).
\]
Let us define the function 
$$\Psi(r)= V_g(B(0,r)) = n \sigma_n \int_0^r \sinh^{n-1}(s) ds, r\geq 0.$$
Let $F$ denote the inverse function of $\Psi$, i.e., $\Psi(F(r)) =r$ for any $r \geq 0$. If we denote by $\rho(t)$ the radius of the geodesic ball in $\H^n$ with volume equal to $\mu_u(t)$, then $\rho(t) = F(\mu_u(t))$. By isoperimertric inequality \cite{Bogelein}, we have
\begin{align*}
-U(\mu_u(t)) \mu_u'(t) &= \int_{\{u=t\}} \lt(\frac 2{1-|x|^2}\rt)^{n-1} d\mathcal H^{n-1}(x)\\
&\geq \int_{\pa B(0,\rho(t))} \lt(\frac 2{1-|x|^2}\rt)^{n-1} d\mathcal H^{n-1}(x)\\
&= n \sigma_n \sinh^{n-1} (F(\mu_u(t))).
\end{align*}
Consequently, we get
\[
(-u^*)'(s) \leq \frac{U(s)}{n \sigma_n \sinh^{n-1}(F(s))}, \quad s\in (0, V_g(\Om)).
\]
Define the function 
\[
v(x) = \int_{V_g(B(0,d(0,x))}^{V_g(\Om)} \frac{U(s)}{n \sigma_n \sinh^{n-1}(F(s))} ds,
\]
for $x \in \Om^\sharp$, where $\Om^\sharp$ denotes the geodesic ball in $\H^n$ with volume $V_g(\Om)$. Extending $v =0$ for $x\in \H^n \setminus \Om^\sharp$. We have $u^\sharp(x) \leq v(x)$ for any $x \in \H^n$. We have
\[
|\na_g u^\sharp(x)|_g \leq |\na_g v(x)|_g = U(V_g(B(0,d(x)))) \prec |\na_g u(x)|_g.
\]
Consequently, we get $(|\na_g u^\sharp|_g)^* \prec (|\na_g u|_g)^*$ in $(0,V_g(\Om))$. Since $q \leq p$, then Proposition $1$ in \cite{CassaniTarsi2009} (or Corollary $2.1$ in \cite{Alvino1989}), we have
\[
\int_0^{V_g(\Om)} ((|\na_g u^\sharp|_g)^*(t))^q t^{\frac qp -1} dt \leq \int_0^{V_g(\Om)} (|\na_g u|_g)^*(t) ((|\na_g u^\sharp|_g)^*(t))^{q-1} t^{\frac qp -1} dt.
\]
Applying H\"older inequality, we get $ \|\na_g u^\sharp\|_{L^{p,q}(\H^n)} \leq \|\na_g u\|_{L^{p,q}(\H^n)}$ as desired.
\end{proof}

The next proposition is the key in the proof of Theorem \ref{PoincareSobolev} and Theorem \ref{3rdMT}.

\begin{proposition}\label{keyprop}
Given $n\geq 3$, $1< p < \infty$ and $\frac{2n}{n-1} \leq q \leq p$. Let $u\in W^{1}L^{p,q}(\H^n)$ be a radially symmetric non-increasing function, and $u^*$ be its non-increasing rearrangement function. Define $v(r) =u^*(\sigma_n r^n)$, $r\geq 0$. Then it holds
\begin{equation}\label{eq:keyestimate}
\|\na_g u\|_{p,q}^q - \lt(\frac{n-1}p\rt)^q \|u\|_{p,q}^q \geq n \sigma_n^{\frac qp}\int_0^\infty |v'(r)|^q r^{\frac{nq}p -1} dr.
\end{equation}
\end{proposition}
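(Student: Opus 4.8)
The plan is to exploit the fact that $u$ is already radially symmetric and non-increasing, so we may work directly with the one-dimensional profile. Writing $u(x) = \phi(d(0,x))$ for a non-increasing function $\phi$ on $[0,\infty)$, and using the polar coordinate formula \eqref{eq:polar} together with $|\na_g u|_g = |\phi'(d(0,x))|$, the distribution function of $u$ satisfies $\mu_u(t) = \Psi(\phi^{-1}(t)) = n\sigma_n \int_0^{\phi^{-1}(t)} \sinh^{n-1}(s)\,ds$. Hence $u^*(\tau) = \phi(\Psi^{-1}(\tau))$, and by construction $v(r) = u^*(\sigma_n r^n) = \phi(\Psi^{-1}(\sigma_n r^n))$. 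First I would record the change-of-variables dictionary: with $\tau = \sigma_n r^n$ we have $d\tau = n\sigma_n r^{n-1}\,dr$, so that the Lorentz quasi-norms become $\|u\|_{p,q}^q = \int_0^\infty (\tau^{1/p} u^*(\tau))^q \frac{d\tau}{\tau} = n\sigma_n^{q/p}\int_0^\infty v(r)^q r^{nq/p-1}\,dr$, and similarly $\|\na_g u\|_{p,q}^q = n\sigma_n^{q/p}\int_0^\infty \big((|\na_g u|_g)^*(\sigma_n r^n)\big)^q r^{nq/p-1}\,dr$. So after dividing by $n\sigma_n^{q/p}$ the claimed inequality \eqref{eq:keyestimate} reduces to the purely one-dimensional statement
\[
\int_0^\infty \big((|\na_g u|_g)^*(\sigma_n r^n)\big)^q r^{\frac{nq}p-1}\,dr - \lt(\frac{n-1}p\rt)^q \int_0^\infty v(r)^q r^{\frac{nq}p-1}\,dr \geq \int_0^\infty |v'(r)|^q r^{\frac{nq}p-1}\,dr.
\]

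The second step is to control $(|\na_g u|_g)^*(\sigma_n r^n)$ from below by $|v'(r)|$ plus the "hyperbolic correction". Since $u$ is radial non-increasing, $(|\na_g u|_g)^*(\mu_u(t))$ equals (a.e.) the value $|\phi'|$ on the level set $\{u=t\}$, and from $\mu_u(t) = \Psi(\phi^{-1}(t))$ one differentiates to get the chain-rule identity $(u^*)'(\tau) = \phi'(\rho)/(n\sigma_n\sinh^{n-1}\rho)$ where $\rho = \Psi^{-1}(\tau)$, so that $(|\na_g u|_g)^*(\tau) = n\sigma_n \sinh^{n-1}(\Psi^{-1}(\tau))\,|{(u^*)'(\tau)}|$. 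Translating to the $v$ variable with $\tau = \sigma_n r^n$ gives $(|\na_g u|_g)^*(\sigma_n r^n) = \dfrac{\sinh^{n-1}(\Psi^{-1}(\sigma_n r^n))}{r^{n-1}}\,|v'(r)|$. Setting $\rho(r) = \Psi^{-1}(\sigma_n r^n)$ — the geodesic radius whose hyperbolic ball has the same volume as the Euclidean ball of radius $r$ in $\R^n$ — the inequality to prove becomes
\[
\int_0^\infty \lt(\frac{\sinh\rho(r)}{r}\rt)^{(n-1)q} |v'(r)|^q r^{\frac{nq}p-1}\,dr \geq \int_0^\infty \lt(1 + \lt(\frac{n-1}p\rt)^q \frac{v(r)^q}{|v'(r)|^q}\rt) |v'(r)|^q r^{\frac{nq}p-1}\,dr,
\]
where $\rho(r)$ is defined implicitly by $n\sigma_n\int_0^{\rho(r)}\sinh^{n-1}(s)\,ds = \sigma_n r^n$, equivalently $n\int_0^{\rho(r)}\sinh^{n-1} s\,ds = r^n$. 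The genuinely new analytic content, as in \cite{NgoNguyenAMV,NguyenPS2018}, is the pointwise differential inequality
\[
\lt(\frac{\sinh\rho(r)}{r}\rt)^{n-1} |v'(r)| \geq |v'(r)| \cdot\Big(1 + \text{something}\Big)^{1/q}
\]
obtained not by this crude bound but by an integration-by-parts / Hardy-type argument: one integrates $|v'|^q r^{nq/p-1}$ against the weight, uses that $\rho(r)\geq r$ (since $\sinh s \geq s$, so the equal-volume hyperbolic radius is smaller — wait, $\sinh s\ge s$ forces $\rho(r)\le r$; the correct monotonicity is $\sinh\rho(r)/r \ge 1$, which does hold because $n\int_0^{\rho}\sinh^{n-1}=r^n = n\int_0^{r}s^{n-1}ds$ and $\sinh\ge\mathrm{id}$ give $\rho\le r$ hence $(\sinh\rho/r)^{n-1}\ge (\sinh\rho/\rho)^{n-1}\cdot(\rho/r)^{n-1}$, and one checks $(\sinh\rho)^{n-1}\ge \rho^{n-1}+(n-1)\int\dots$), and extracts exactly the Poincaré-type surplus $(\frac{n-1}{p})^q v^q$.

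Concretely, the mechanism I expect to use is: expand $\big(\sinh\rho(r)\big)^{n-1}$ and note $\frac{d}{dr}\big[\text{(something like }\cosh\rho\text{ or }\int\sinh^{n-1})\big]$ relates back to $r^{n-1}$; then write the left side as $\int_0^\infty |v'(r)|^q \big(\frac{\sinh\rho(r)}{r}\big)^{(n-1)q} r^{nq/p-1}\,dr$ and apply the elementary inequality $(a+b)^{q}\ge a^q + q a^{q-1} b$ for $a,b\ge 0$ (here is where $q\ge 2$, a fortiori $q\ge \frac{2n}{n-1}$, is needed to linearize) to peel off the extra term, integrating by parts to move a derivative from $v$ onto the weight and recognizing $\big(\frac{n-1}{p}\big)^q r^{nq/p-1} v(r)^q$ after using $\frac{(\sinh\rho(r))^{n-1}}{r^{n-1}}\rho'(r) = \frac{1}{r^{n-1}}\cdot\frac{r^{n-1}}{1} $ ... i.e. $\rho'(r)\sinh^{n-1}\rho(r) = r^{n-1}$ from differentiating the defining volume identity. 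That last identity, $\rho'(r)\,\sinh^{n-1}\rho(r) = r^{n-1}$, is the linchpin: it converts all $\rho$-derivatives into powers of $r$ cleanly.

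The main obstacle I anticipate is twofold. First, justifying the integration by parts and the manipulations when $v'$ may vanish on sets of positive measure or $v$ is merely Lipschitz (the profile of a $C_0^\infty$ function after rearrangement need not be smooth), which requires an approximation or a careful weak-derivative argument — but by Theorem \ref{PS} and the density statements already invoked one may assume $u$, hence $v$, is as regular as needed and argue on the set $\{v'\neq 0\}$. Second, and more essentially, getting the constant exactly $(\frac{n-1}{p})^q$ rather than something smaller: this forces one to use the sharp form of the elementary inequality together with the asymptotics $\sinh\rho(r)/r \to 1$ as $r\to 0$ and $\rho(r)\sim \frac{1}{n-1}\log(\text{const}\cdot r^n)$ as $r\to\infty$ (so that $\sinh\rho(r) \sim c\,r^{n/(n-1)}$ and the weight $(\sinh\rho/r)^{n-1}$ grows like $r$), ensuring no slack is lost at either endpoint; the condition $q\ge \frac{2n}{n-1}$ (equivalently $\frac{nq}{p}\ge \frac{nq}{n-1}\ge 2$ in the borderline $p=n$) is precisely what makes the endpoint boundary terms from the integration by parts vanish. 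I would organize the write-up as: (i) the change of variables reducing to the 1-D inequality; (ii) the defining ODE $\rho'\sinh^{n-1}\rho = r^{n-1}$ and the bound $\sinh^{n-1}\rho(r)\ge r^{n-1} + (n-1)\int_0^r s^{n-1}\,(\coth\text{-type factor})\,ds$ giving $(\sinh\rho/r)^{n-1}\ge 1$ with a quantitative remainder; (iii) linearization via $(1+t)^{q/(n-1)\cdots}$ and integration by parts to produce the Poincaré term; (iv) a check that the boundary terms vanish under $\frac{2n}{n-1}\le q\le p$.
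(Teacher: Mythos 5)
Your reduction to a one-dimensional inequality in $v$ via the change of variables $t=\sigma_n r^n$, and the chain-rule dictionary relating $v'$, $(u^*)'$, and the hyperbolic gradient profile, match the structure of the paper's proof. But there are two genuine gaps. First, the identity $(|\na_g u|_g)^*(\tau) = n\sigma_n\sinh^{n-1}(\Psi^{-1}(\tau))\,|(u^*)'(\tau)|$ is false in general: for a radially non-increasing $u=\phi(d(0,\cdot))$, the radial function $|\na_g u|_g(x)=|\phi'(d(0,x))|$ need not be monotone in $d(0,x)$, so its decreasing rearrangement $(|\na_g u|_g)^*$ is not the same as the level-set transported function $U$ (characterized by $U(\Psi(\rho))=|\phi'(\rho)|$), and only the latter equals $n\sigma_n\sinh^{n-1}(\Psi^{-1}(\tau))|(u^*)'(\tau)|$. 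What makes the argument work is that $U$ and $|\na_g u|_g$ are equimeasurable, hence $U^*=(|\na_g u|_g)^*$, and the Hardy--Littlewood inequality gives
$\int_0^\infty U(t)^q t^{q/p-1}\,dt \leq \int_0^\infty (U^*(t))^q t^{q/p-1}\,dt = \|\na_g u\|_{p,q}^q$
precisely because $t^{q/p-1}$ is non-increasing when $q\leq p$. This is where the constraint $q\leq p$ is actually used; your write-up never isolates this step, and the equality you assert would overclaim.

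Second, and more essentially, you have not identified the geometric ingredient that produces the constant $(\tfrac{n-1}{p})^q$. The paper invokes Lemma~2.1 of \cite{NguyenPS2018}, the pointwise estimate
$\sinh^{q(n-1)}(F(t)) \geq (t/\sigma_n)^{q(n-1)/n} + (\tfrac{n-1}{n})^q (t/\sigma_n)^q$,
equivalently $\bigl(\sinh\rho(r)/r\bigr)^{q(n-1)} \geq 1 + (\tfrac{n-1}{n})^q r^q$. This splits the gradient weight into a ``Euclidean'' piece (which yields the right-hand side of \eqref{eq:keyestimate} after the change of variables) and a correction $(\tfrac{n-1}{n})^q r^q$, which after the one-dimensional Hardy inequality $\int_0^\infty |v'|^q r^{\beta+q-1}\,dr \geq (\beta/q)^q\int_0^\infty v^q r^{\beta-1}\,dr$ with $\beta=nq/p$ yields exactly the Poincar\'e term $(\tfrac{n-1}{p})^q\|u\|_{p,q}^q$. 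Your proposed mechanism --- expanding $\sinh^{n-1}\rho(r)$ and invoking the convexity inequality $(a+b)^q\geq a^q+qa^{q-1}b$ --- cannot reproduce this: convexity already holds for $q\geq 1$ and only linearizes around the Euclidean term, whereas the lemma is a genuinely nonlinear lower bound whose validity is where $q\geq\frac{2n}{n-1}$ actually enters. (The condition $q\geq 2$ is used separately, for the inequality $(b-a)^q\geq b^q+|a|^q-qab^{q-1}$ in the Hardy step after the substitution $u^*(t)=w(t)t^{-1/p}$, and is implied by $q\geq\frac{2n}{n-1}$.) Without the correct form of that lemma, your outline does not reach the sharp constant.
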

\begin{proof}
Let $U$ be the function built from $|\na_g u|_g$ on the level sets of $u$. Since $u$ is radially symmetric, then we have
\[
U(V_g(B(0,d(0,x)))) = |\na_g u(x)|_g.
\]
Since $q \leq p$, by Hardy--Littlewood inequality, it holds that
\begin{equation}\label{eq:gradientLorentz}
\int_0^\infty U(t)^q t^{\frac qp -1} dt \leq \int_0^\infty (U^*(t))^q t^{\frac qp -1} dt = \int_0^\infty ((|\na_g u|_g)^*(t))^q t^{\frac qp -1} dt = \|\na_g u\|_{p,q}^q.
\end{equation}
From the proof of Theorem \ref{PS}, we have
\begin{equation}\label{eq:u*}
u^*(t) = \int_t^\infty \frac{U(s)}{n \sigma_n \sinh^{n-1}(F(s))} ds.
\end{equation}
From the equality $\Psi(F(r)) = r$ or equivalently
\[
r = n \sigma_n \int_0^{F(r)} \sinh^{n-1} (s) ds, \quad r\geq 0,
\]
it is easy to verify that 
\begin{equation}\label{eq:estF}
n\sigma_n \sinh^{n-1}(F(r)) > (n-1)r,\quad \forall\, r > 0,
\end{equation}  
$\sinh^{n-1}(F(r)) \sim r $ as $r \to \infty$ and $\sinh^{n-1} (F(r)) \sim r^{\frac {n-1}n} $ as $r \to 0$. Consequently, we have the following estimates 
\begin{equation}\label{eq:dangdieuu*}
\lim_{r\to 0} u^*(r) r^{\frac {n-p}{np}} = \lim_{r\to \infty} u^*(r) r^{\frac 1p} = 0.
\end{equation}
It follows from \eqref{eq:gradientLorentz} and \eqref{eq:u*} that
\[
\|\na_g u\|_{p,q}^q \geq \int_0^\infty |(u^*)'(t)|^q (n \sigma_n \sinh^{n-1}(F(t)))^q t^{\frac qp -1} dt.
\]
Since $q \geq \frac{2n}{n-1}$, then we have from \cite[Lemma $2.1$]{NguyenPS2018} that 
\[
\sinh^{q(n-1)}(F(t)) \geq \lt(\frac t{\sigma_n}\rt)^{q\frac{n-1}n} + \lt(\frac{n-1}n\rt)^q \lt(\frac t{\sigma_n}\rt)^q,
\]
since $F(t) = \Phi^{-1}(t/\sigma_n)$ with $\Phi$ being defined by \cite[Formula $(2.6)$]{NguyenPS2018}. Plugging this estimate into the preceding one, we obtain
\begin{equation}\label{eq:key1}
\|\na_g u\|_{p,q}^q \geq n^q \sigma_n^{\frac qn} \int_0^\infty |(u^*)'(t)|^q t^{q\frac{n-1}n + \frac qp -1} dt + (n-1)^q \int_0^\infty |(u^*)'(t)|^q t^{q + \frac{q}p -1} dt.
\end{equation}
Notice that $v'(r) = (u^*)'(\sigma_n r^n) n \sigma_n r^{n-1}$, so by a simple change of variable $t = \sigma_n r^n$ we get
\begin{equation}\label{eq:doibien}
n^q \sigma_n^{\frac qn} \int_0^\infty |(u^*)'(t)|^q t^{q\frac{n-1}n + \frac qp -1} dt = n \sigma_n^{\frac qp} \int_0^\infty |v'(r)|^q r^{n\frac qp -1} dr.
\end{equation}
Making the change of function $u^*(t) = w(t) t^{-\frac1p}$, we have
\[
0\leq (u^*)'(t) = -w'(t) t^{-\frac1p} + \frac1p w(t) t^{-\frac 1p -1}.
\]
We can readily check that if $b -a \geq 0, b\geq 0$ and $q \geq 2$ then
\[
(b-a)^q \geq b^q + |a|^q -q a b^{q-1}.
\]
By this inequality, we have
\begin{align*}
\int_0^\infty |(u^*)'(t)|^q t^{q + \frac{q}p -1} dt &\geq \frac1{p^q} \int_0^\infty (u^*(t))^q t^{\frac qp -1} dt + \int_0^\infty |w'(t)|^q t^{q-1} dt\\
&\quad -\frac{q}{p^{q-1}} \int_0^\infty w'(t) w(t)^{q-1} dt
\end{align*}
Using integration by parts and \eqref{eq:dangdieuu*}, we get
\begin{equation}\label{eq:Hardy1dim}
\int_0^\infty |(u^*)'(t)|^q t^{q + \frac{q}p -1} dt \geq \frac1 {p^q} \int_0^\infty u^*(t)^q t^{\frac qp -1} dt + \int_0^\infty |w'(t)|^q t^{q-1} dt \geq \frac1{p^q} \|u\|_{p,q}^q.
\end{equation}
Inserting \eqref{eq:Hardy1dim} and \eqref{eq:doibien} into \eqref{eq:key1} we obtain \eqref{eq:keyestimate}. This finishes the proof of this proposition.
\end{proof}

In the proof of Theorem \ref{PoincareSobolev}, we shall need the following sharp Sobolev inequality with the fractional dimension (see \cite[Proposition $1.1$]{NguyenPLMS})

\begin{lemma}\label{Sobolevfract}
Let $\beta \geqs q \geqs 1$. Then there exists a constant $C \geqs 0$ such that
\[
\int_0^\infty |w'(r)|^q r^{\beta -1} dr \geq C \lt(\int_0^\infty |w(r)|^{\frac{\beta q}{\beta -q}} r^{\beta -1} dr\rt)^{\frac{\beta -q}{\beta}}.
\]
Furthermore, if we denote by $S(\beta,q)$ the sharp constant in the preceding inequality then equality holds if $w(r) =(1 + r^{q/(q-1)})^{-(\beta -q)/q}$.
\end{lemma}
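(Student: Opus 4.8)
The statement is a one-dimensional, ``fractional-dimension'' Sobolev inequality of Bliss type: on $(0,\infty)$ the measure $d\mu_\beta(r)=r^{\beta-1}\,dr$ is the radial part of the Lebesgue measure in ``dimension $\beta$'', and $q^*:=\frac{\beta q}{\beta-q}$ is precisely the associated Sobolev conjugate exponent, so one expects a Talenti-type extremal profile. The plan is to perform the usual reductions and then prove the sharp inequality for monotone profiles by an explicit argument.

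\emph{Reductions.} By density it suffices to treat absolutely continuous $w$ for which both sides are finite (e.g.\ $w\in C_0^\infty((0,\infty))$). Replacing $w$ by $|w|$ leaves the right-hand side unchanged and does not increase the left-hand side, so we may assume $w\ge 0$. Next one reduces to non-increasing $w$ by means of the one-dimensional decreasing rearrangement $w^\sharp$ with respect to $\mu_\beta$: it is equimeasurable, hence preserves $\int_0^\infty w^{q^*}\,d\mu_\beta$, and it obeys the one-dimensional P\'olya--Szeg\"o inequality $\int_0^\infty |(w^\sharp)'|^q\,d\mu_\beta\le\int_0^\infty |w'|^q\,d\mu_\beta$; the latter follows from the coarea formula together with the one-dimensional isoperimetric inequality for $\mu_\beta$, namely that intervals $(0,\rho)$ minimise perimeter among sets of given $\mu_\beta$-measure. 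Thus we may assume $w\ge 0$ is non-increasing with $w(\infty)=0$, and the claim becomes $\int_0^\infty(-w')^q r^{\beta-1}\,dr\ge S(\beta,q)^q\big(\int_0^\infty w^{q^*}r^{\beta-1}\,dr\big)^{q/q^*}$.

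\emph{The sharp inequality for monotone profiles.} Two routes are available, and I would use the first. (i) Bliss substitution: after the scaling normalisation $\int_0^\infty w^{q^*}r^{\beta-1}\,dr=1$, introduce a new independent variable through the distribution function of the candidate extremal $w_0(r)=(1+r^{q/(q-1)})^{-(\beta-q)/q}$, rewrite $w$ in these ``$w_0$-adapted'' coordinates, and reduce the desired bound to a pointwise convexity (Jensen) inequality; equality in it forces $w$ to be a dilate of $w_0$. One then checks directly that $w_0$ solves the Euler--Lagrange ODE of the quotient, and computes $S(\beta,q)$ in closed form by evaluating the two integrals on $w_0$ as Euler Beta integrals --- this is the step that uses $\beta>q$, to guarantee convergence at infinity. (ii) Mass transport: the Cordero-Erausquin--Nazaret--Villani proof of the sharp Euclidean Sobolev inequality (cf.\ \cite{CNV}) carries over with the integer dimension replaced by the real parameter $\beta>q$ --- push the probability measure proportional to $w^{q^*}\,d\mu_\beta$ onto the corresponding measure built from $w_0$ by the monotone map of $(0,\infty)$, use the Monge--Amp\`ere equation and integrate by parts; the decay of $w$ and $w'$ kills the boundary terms, and one reads off both $S(\beta,q)$ and the equality case.

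\emph{Main difficulty.} The delicate points are: (a) the monotonicity reduction when $\beta$ is not an integer --- there is no ambient Euclidean space to symmetrise in, so the one-dimensional P\'olya--Szeg\"o inequality for $\mu_\beta$ must be established directly; and (b) upgrading the critical point $w_0$ to a genuine global minimiser. In route (i), the crux is choosing the change of variables so that the quotient estimate turns into exactly a Jensen inequality with the rigidity statement built in; the exponent bookkeeping --- verifying that $w_0$ satisfies the Euler--Lagrange equation and extracting the explicit value of $S(\beta,q)$ --- is routine but must be carried out with care.
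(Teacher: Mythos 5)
The paper offers no proof of Lemma \ref{Sobolevfract}; it is quoted from \cite[Proposition 1.1]{NguyenPLMS}, whose argument is the mass-transport one in the spirit of Cordero--Erausquin--Nazaret--Villani \cite{CNV} adapted to the one-dimensional density $r^{\beta-1}\,dr$ with real $\beta>q$ --- that is, precisely your route~(ii), where the monotone increasing map of $(0,\infty)$ pushing $w^{\beta q/(\beta-q)}\,d\mu_\beta$ onto the corresponding measure built from $w_0$ plays the role of the Brenier map. Your preferred route~(i), the Bliss change of variables followed by a Jensen/convexity rigidity argument, is a legitimate classical alternative (essentially how Bliss, and later Talenti, handled the integer-dimensional radial case). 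The two routes buy different things. The mass-transport proof does not need the preliminary reduction to non-increasing profiles, since the monotone transport map does the rearranging for free; so the weighted one-dimensional P\'olya--Szeg\"o lemma you invoke becomes optional rather than load-bearing. By contrast, the Bliss substitution genuinely requires $w$ monotone with $w(\infty)=0$, so there your reduction, together with the one-dimensional isoperimetric property of intervals $(0,\rho)$ for $\mu_\beta$, is indispensable --- and you correctly flag it as the delicate point when $\beta$ is not an integer. One small normalisation mismatch to fix before computing the constant: in the lemma, in \eqref{eq:Sbetaq}, and in the proof of Theorem \ref{PoincareSobolev}, $S(\beta,q)$ is the constant multiplying $\bigl(\int_0^\infty w^{\beta q/(\beta-q)}r^{\beta-1}\,dr\bigr)^{(\beta-q)/\beta}$ itself, not its $q$th root; you write $S(\beta,q)^q$ in the same position, so either drop the exponent $q$ or redefine $S$, otherwise the Beta-integral evaluation on $w_0$ will not reproduce \eqref{eq:Sbetaq}.
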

Notice that when $\beta$ is an integer then the Lemma above is exactly the sharp Sobolev inequality of Talenti \cite{Talenti1976} and Aubin \cite{Aubin1976} applied to radially symmetric functions.

A direct computation shows that
\begin{equation}\label{eq:Sbetaq}
S(\beta,q) = \beta\lt(\frac{\beta -q}{q-1}\rt)^{q-1}\lt[\frac{q-1}q \frac{\Gamma(\frac\be q) \Gamma(\frac{\beta(q-1)}q)}{\Gamma(\beta)}\rt]^{\frac q\beta}.
\end{equation}

\section{Proof of Theorem \ref{Poincare}}
In this section, we prove Theorems \ref{Poincare}. Obviously, Theorem \ref{Poincare} is a consequence of Proposition \ref{keyprop} when $\frac{2n}{n-1} \leq q \leq p$. Here, we give the proof for any $q \leq p$. In fact, we shall use some estimates in the proof of Proposition \ref{keyprop}.

\begin{proof}[Proof of Theorem \ref{Poincare}]
It follows from the P\'olya--Szeg\"o principle from Theorem \ref{PS} that it is sufficient to prove Theorem \ref{Poincare} for radially symmetric non-increasing function $u\in W^1 L^{p,q}(\H^n)$. Let $u\in W^1L^{p,q}(\H^n)$ be such a function, and let $U$ be the function built from $\na_g u$ on the level sets of $u$. Since $q \leq p$, then the estimate \eqref{eq:gradientLorentz} holds. Hence, using integration by parts, the estimates \eqref{eq:dangdieuu*} and H\"older inequality, we have
\begin{align*}
\int_0^\infty (u^*(s))^q t^{\frac qp -1} dt & = \frac pq \int_0^\infty (u^*(s))^q d t^{\frac qp}\\
&= p \int_0^\infty (u^*(t))^{q-1} t^{\frac {q-1}p} \frac{t^{\frac 1p +1} U(t)}{n \sigma_n \sinh^{n-1} (F(t))} \frac{dt}t\\
&\leq \lt(\int_0^\infty(t^{\frac1p} u^*(s))^q \frac{dt}t \rt)^{\frac{q-1}p } \lt(\int_0^\infty (t^{\frac1p} U(t))^q \lt(\frac t{n \sigma_n \sinh^{n-1} (F(t))}\rt)^q \frac{dt}t\rt)^{\frac1q}.
\end{align*}
Whence, it holds
\begin{align}\label{eq:PSL}
\|u\|_{p,q} &\leq p\lt(\int_0^\infty (t^{\frac1p} U(t))^q \lt(\frac t{n \sigma_n \sinh^{n-1} (F(t))}\rt)^q \frac{dt}t\rt)^{\frac1q}\notag\\
& \leq \frac{p}{n-1} \lt(\int_0^\infty (t^{\frac1p} U(t))^q \frac{dt}t\rt)^{\frac1q} \notag\\
&\leq \frac p{n-1} \|\na_g u\|_{p,q}
\end{align}
here we used \eqref{eq:estF}. This proves \eqref{eq:PoincareLorentz}. 

We next check the sharpness of the constant $(n-1)^q/p^q$ in \eqref{eq:PoincareLorentz}. For $0< a < R$, let us define the function
\[
f_{a,R}(s) = \begin{cases}
a^{-\frac1p} &\mbox{if $s \in (0,a)$},\\
s^{-\frac1p} &\mbox{if $s\in [a,R)$,}\\
R^{-\frac1p} \max\{2 -s/R,0\} &\mbox{if $s \geq R$},
\end{cases}
\]
and 
\[
u_{a,R}(x) = f_{a,R}(V_g(B(0,d(0,x)))).
\]
Notice that $f_{a,R}$ is non-increasing function. Hence, by direct computations we get
\[
\|u_{a,R}\|_{p,q}^q = \int_0^\infty f_{a,R}(s)^q s^{\frac qp -1} ds = \frac pq + \ln \frac Ra + \int_1^2 (2-s)^q s^{\frac qp -1} ds.
\]
Furthermore, we have
\[
f_{a,R}'(s) = \begin{cases}
0 &\mbox{if $s \in (0,a)$ or $s \geqs 2R$},\\
-\frac1p s^{-\frac1p -1} &\mbox{if $a \leqs s \leqs R$,}\\
-R^{-\frac1p-1} &\mbox{if $R \leqs s \leqs 2R$},
\end{cases}
\]
and 
\[
|\na_g u_{a,R}(x)|_g = -f_{a,R}'(V_g(B(0,d(0,x)))) n\sigma_n \sinh^{n-1} (d(0,x)) = U_{a,R}(V_g(B(0,d(0,x))))
\]
with
\[
U_{a,R}(s) = -f_{a,R}'(s) n \sigma_n \sinh^{n-1} (F(s)).
\]
Since $F(r) \to \infty$ as $r \to \infty$, then we can check that
\[
\lim_{r\to \infty} \frac{n \sigma_n \sinh^{n-1}(F(r))}{r} = \frac1{n-1}.
\]
For any $\epsilon \geqs 0$, we can choose $a \geqs 0$ such that 
\[
n \sigma_n \sinh^{n-1}(F(r)) \leq (1 + \epsilon) (n-1) r,\quad\forall\, r\geq a.
\]
For $(2p)^{-p}R \geqs  a$, it is easy to see that $U_{a,R} \leq (n-1)(1+\ep) h(s)$ with 
\[
h(s) = \begin{cases}
a^{-\frac 1p} &\mbox{if $s \in (0,a)$ or $s \geq 2R$},\\
\frac1p s^{-\frac1p} &\mbox{if $a \leq s \leqs (2p)^{-p}R$,}\\
2R^{-\frac1p} &\mbox{if $(2p)^{-p}R \leq s \leqs 2R$,}\\
0 &\mbox{if $s \geq 2R$}.
\end{cases}
\]
Note that $h$ is non-increasing function. Hence, it holds
\begin{align*}
\|\na_g u_{a,R}\|_{p,q}^q &= \int_0^\infty (U_{a,R}^*(t))^q t^{\frac qp -1} dt\\
&\leq (n-1)^q(1+\ep)^q \int_0^\infty h(s)^q t^{\frac qp -1} dt\\
&=(n-1)^q(1+\ep)^q \lt(\frac{p}q + \frac1{p^q} \Big(\ln R -p \ln(2p) -\ln a\Big) + \frac{2^q p}q \Big(2^{\frac qp} - (2p)^{-q}\Big)\rt).
\end{align*}
Consequently, we obtain
\[
\inf_{u\in W^1L^{p,q}(\H^n), u\not\equiv 0} \frac{\|\na_g u\|_{p,q}^q}{\|u\|_{p,q}^q} \leq \limsup_{R\to \infty} \frac{\|\na_g u_{a,R}\|_{p,q}^q}{\|u_{a,R}\|_{p,q}^q} \leq (1+ \epsilon)^q\frac{(n-1)^q}{p^q}.
\]
Since $\epsilon \geqs 0$ is arbitrary, then we get
\[
\inf_{u\in W^1L^{p,q}(\H^n), u\not\equiv 0} \frac{\|\na_g u\|_{p,q}^q}{\|u\|_{p,q}^q} \leq \frac{(n-1)^q}{p^q}.
\]
This shows that $(n-1)^q/p^q$ is the sharp constant in \eqref{eq:PoincareLorentz}.

From the estimate \eqref{eq:PSL}, we see that if $u\not\equiv 0$ then the second inequality is strict since $ n\sigma_n \sinh^{n-1}(F(t)) > (n-1) t$ for $t > 0$. Consequently, the constant $(n-1)^q/p^q$ in \eqref{eq:PoincareLorentz} is not attained by a non-zero function $u$.
\end{proof}

\section{Proof of Theorem \ref{PoincareSobolev}}
This section is addressed to prove Theorem \ref{PoincareSobolev}. The proof is based on Theorem \ref{PS}, Proposition \ref{keyprop} and the weighted Sobolev inequality.

\begin{proof}[Proof of Theorem \ref{PoincareSobolev}]
By the P\'olya--Szeg\"o principle from Theorem \ref{PS}, it is enough to prove Theorem \ref{PoincareSobolev} for radially symmetric non-increasing functions $u \in W^1 L^{p,q}(\H^n)$. For $r \geq 0$, we define $v(r) = u^*(\sigma_n r^n)$. Since $\frac{2n}{n-1} \leq q \leq p$, from Proposition \ref{keyprop} we have
\begin{equation}\label{eq:a1}
\|\na_g u\|_{p,q}^q - \lt(\frac{n-1}p\rt)^q \|u\|_{p,q}^q \geq n \sigma_n^{\frac qp}\int_0^\infty |v'(r)|^q r^{\frac{nq}p -1} dr.
\end{equation}
Making the change of function $v(r) =w(r) r^{-\frac{n-p}p}$, we have
\[
0\leq -v'(r) = -w'(r) r^{-\frac{n-p}p} + \frac{n-p}p w(r) r^{-\frac{n-p}p -1}.
\]
Moreover, it follows from \eqref{eq:dangdieuu*} that
\[
\lim_{r\to 0} w(r) = \lim_{r\to\infty} w(r) =0.
\]
By the convexity of function $t \to |t|^q$, the integration by parts and the asymptotic behavior of $w$ above, we have
\begin{align*}
\int_0^\infty |v'(r)|^q r^{\frac{nq}p -1} dr&\geq \lt(\frac{n-p}p\rt)^q \int_0^\infty v(r)^q r^{n\frac{q}p -q -1} dr + \lt(\frac{n-p}p\rt)^{q-1} \int_0^\infty (w(r)^q)' dr\\
&=\lt(\frac{n-p}p\rt)^q \frac1{n \sigma_n^{\frac q{p^*}}} \|u\|_{p^*,q}^q.
\end{align*}
Consequently, we obtain
\[
\|\na_g u\|_{p,q}^q - \lt(\frac{n-1}p\rt)^q \|u\|_{p,q}^q \geq \lt(\frac{n-p}p\rt)^q \sigma_n^{\frac qn} \|u\|_{p^*,q}^q.
\]
This proves \eqref{eq:PSLorentz1} for $l = q$.

For $q\leqs l \leq \frac{nq}{n-p}$, we let $\alpha = \frac{nq -l(n-p)}p \in [0,q)$. Making the change of function $v(r) = w(r^{\frac{q-\al}q})$, we have
\begin{equation}\label{eq:a2}
\int_0^\infty |v'(r)|^q r^{\frac{nq}p -1} dr = \lt(\frac{q-\al}q\rt)^{q-1} \int_0^\infty |w'(r)|^q r^{\frac{(nq -p\al)q}{p(q-\al)} -1} dr.
\end{equation}
Notice that $\frac{(nq -p\al)q}{p(q-\al)} \geqs q$ since $p \leqs n$. Lemma \ref{Sobolevfract} implies
\begin{align*}
\int_0^\infty |w'(r)|^q r^{\frac{(nq -p\al)q}{p(q-\al)} -1} dr&\geq S\Big(\frac{(nq -p\al)q}{p(q-\al)},q\Big) \lt(\int_0^\infty w(r)^{\frac{nq -p\al}{n-p}} r^{\frac{(nq -p\al)q}{p(q-\al)} -1} dr\rt)^{\frac{q(n-p)}{nq -p\al}}\\
&=S\Big(\frac{lq}{l-p},q\Big) \lt(\int_0^\infty w(s)^l s^{\frac{nl}{p^*} \frac q{q-\al} -1} ds\rt)^{\frac ql}\\
&= S\Big(\frac{lq}{l-p},q\Big) \Big(\frac{(n-p)(l-q)}{qp}\Big)^{\frac ql} \lt(\int_0^\infty v(s)^l s^{\frac{nl}{p^*} -1} ds\rt)^{\frac ql}\\
&= S\Big(\frac{lq}{l-p},q\Big) \Big(\frac{(n-p)(l-q)}{qp}\Big)^{\frac ql} n^{-\frac ql} \sigma_n^{-\frac q{p^*}} \lt(\int_0^\infty (u^*(t))^l t^{\frac l{p^*} -1} dt\rt)^{\frac ql}\\
&= S\Big(\frac{lq}{l-p},q\Big) \Big(\frac{(n-p)(l-q)}{qp}\Big)^{\frac ql} n^{-\frac ql} \sigma_n^{-\frac q{p^*}} \|u\|_{p^*,l}^q,
\end{align*}
here we make the change of variable $r =s^{\frac{q-\al}q}$ in the first equality, and $t = \sigma_n s^n$ in the second equality. Combining the previous estimate together with \eqref{eq:a1} and \eqref{eq:a2} yields
\begin{align*}
\|\na_g u\|_{p,q}^q - \lt(\frac{n-1}p\rt)^q \|u\|_{p,q}^q \geq n^{1-\frac ql} \sigma_n^{\frac qn} \Big(\frac{(n-p)(l-q)}{qp}\Big)^{q+\frac ql -1} S\Big(\frac{lq}{l-p},q\Big) \|u\|_{p^*,l}^q
\end{align*}
as wanted. This proves the inequality \eqref{eq:PSLorentz1}.

\end{proof}

\section{Proof of Theorem \ref{3rdMT}}
In this section, we prove Theorem \ref{3rdMT}. The proof is based on Proposition \ref{keyprop} and the Moser--Trudinger inequality involving to the fractional dimension in Lemma \ref{MT} below. Let $\theta \geqs 1$, we denote by $\lam_\theta$ the measure on $[0,\infty)$ of density
\[
d\lam_{\theta} = \theta \sigma_{\theta} x^{\theta -1} dx, \quad \sigma_{\theta} = \frac{ \pi^{\frac\theta 2}}{\Gamma(\frac\theta 2+ 1)}.
\]
For $0\leqs R \leq \infty$ and $1\leq p \leqs \infty$, we denote by $L_\theta^p(0,R)$  the weighted Lebesgue space of all measurable functions $u: (0,R) \to \R$ for which
\[
\|u\|_{L^p_\theta(0,R)}= \lt(\int_0^R |u|^p d\lam_\theta\rt)^{\frac1p} \leqs \infty.
\]
Besides, we define
\[
W^{1,p}_{\al,\theta}(0,R) =\Big\{u\in L^p_\theta(0,R)\, :\, u' \in L_\alpha^p(0,R),\,\, \lim_{x\to R^{-}} u(x) =0\Big\}, \quad \al, \theta \geqs 1.
\]
In \cite{deOliveira}, de Oliveira and do \'O prove the following sharp Moser--Trudinger inequality involving the measure $\lam_\theta$: suppose $0 \leqs R \leqs \infty$ and $\alpha \geq 2, \theta \geq 1$, then 
\begin{equation}\label{eq:MTOO}
D_{\al,\theta}(R) :=\sup_{u\in W^{1,\al}_{\al,\theta}(0,R),\, \|u'\|_{L^\alpha_\alpha(0,R)} \leq 1} \int_0^R e^{\mu_{\al,n} |u|^{\frac{\alpha}{\alpha -1}}} d\lam_\theta \leqs \infty
\end{equation}
where $\mu_{\al,\theta} = \theta \alpha^{\frac1{\al -1}} \sigma_{\al}^{\frac1{\alpha -1}}$. Denote $D_{\al,\theta} = D_{\al,\theta}(1)$. It is easy to see that $D_{\al,\theta}(R) = D_{\al,\theta} R^\theta$. We shall need the following result. 
\begin{lemma}\label{MT}
Let $2\leq \alpha \leq n$. There exists a constant $C_{\alpha,n} \geqs 0$ such that for any $u \in W^{1,\alpha}_{\alpha,\alpha}(0,\infty)\cap L_n^n(0,\infty), $ $u' \leq 0$ and $\|u\|_{L^\alpha_\alpha(0,\infty)}^\alpha + \|u'\|_{L^\alpha_\alpha(0,\infty)}^\alpha\leq 1$, it holds
\begin{equation}\label{eq:Abreu}
\int_0^\infty \Phi_\alpha(\mu_{\al,n} |u|^{\frac \alpha{\alpha -1}}) d\lam_n \leq C_{\al,n}\big(1 + \|u\|_{L^n_n(0,\infty)}^n\big).
\end{equation}
\end{lemma}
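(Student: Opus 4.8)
The plan is to reduce the weighted Moser--Trudinger inequality with the ``mismatched'' measure $\lam_n$ (while the gradient is controlled only in $L^\al_\al$) to the matched inequality \eqref{eq:MTOO} with $\theta = n$ and $\al$ as given, by splitting the half-line into a region near the origin and a region near infinity. First I would fix $u$ with $u' \le 0$, $\|u\|_{L^\al_\al(0,\infty)}^\al + \|u'\|_{L^\al_\al(0,\infty)}^\al \le 1$, and normalise: set $\tau := \|u'\|_{L^\al_\al(0,\infty)}^\al \le 1$ and observe that $u$ is nonincreasing and tends to $0$ at infinity, so $u \ge 0$ and $u$ is bounded. The elementary one-dimensional estimate coming from $u(r) = -\int_r^\infty u'(s)\,ds$ together with H\"older in the measure $d\lam_\al = \al\sigma_\al s^{\al-1}ds$ gives a pointwise bound $u(r)^{\al/(\al-1)} \le \mu_{\al,n}^{-1}\,\tau^{1/(\al-1)}\,(\text{const})\,\log(1/(\sigma_\al r^\al)) + O(1)$ for small $r$, exactly as in the classical radial lemma; this is the bound that makes the exponential integrable against $s^{n-1}ds$ once the exponent is below the critical one.

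The key split is at radius $r=1$. For $r \ge 1$ we have $s^{n-1} \le s^{\al-1}$ is false in general (since $n \ge \al$), so instead I would use that on $[1,\infty)$ the function $u$ is small in the following quantitative sense: $u(r)^\al \le r^{-\al}\int_r^\infty u(s)^\al\,\al\sigma_\al s^{\al-1}\,ds/(\al\sigma_\al) \le (\text{const})\,r^{-\al}$, hence $u(r)^{\al/(\al-1)} \to 0$ and in fact $\Phi_\al(\mu_{\al,n}u^{\al/(\al-1)})$ is, for $r$ large, bounded by a constant times $(u(r)^{\al/(\al-1)})^{j_\al}$ with $j_\al = \min\{j : j \ge 1 + n(\al-1)/\al\}$, since $\Phi_\al$ vanishes to order $j_\al$ at $0$; the choice of $j_\al$ is precisely what guarantees $\al j_\al/(\al-1) \cdot(\text{decay rate of }u) $ beats the growth $s^{n-1}$, so $\int_1^\infty \Phi_\al(\mu_{\al,n}u^{\al/(\al-1)})\,d\lam_n$ is controlled by $C(1 + \|u\|_{L^n_n(0,\infty)}^n)$ after interpolating between the $L^\al_\al$ bound and the $L^n_n$ norm. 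For $r \in (0,1)$ I would compare $d\lam_n = n\sigma_n s^{n-1}ds$ with $d\lam_\al$: since $s^{n-1} \le s^{\al-1}$ on $(0,1)$ when $n \ge \al$, one has $\int_0^1 \Phi_\al(\mu_{\al,n}u^{\al/(\al-1)})\,d\lam_n \le \frac{n\sigma_n}{\al\sigma_\al}\int_0^1 \Phi_\al(\mu_{\al,n}u^{\al/(\al-1)})\,d\lam_\al$, and the latter is bounded by $D_{\al,\al}$ from \eqref{eq:MTOO} (with $\theta=\al$, $R=1$) applied to $u$ restricted to $(0,1)$ — here one must be slightly careful because $u(1)$ need not vanish, so I would instead apply it to $\tilde u(s) = (u(s) - u(1))_+$, whose $L^\al_\al(0,1)$-gradient norm is $\le \tau^{1/\al} \le 1$, and absorb the difference $\Phi_\al(\mu_{\al,n}(u(1)+\tilde u)^{\al/(\al-1)}) \le e^{c\,u(1)^{\al/(\al-1)}}\Phi_\al(\mu_{\al,n}'\tilde u^{\al/(\al-1)})$ using $u(1) \le \|u\|_{L^n_n}$-type control together with the subcriticality room left by $\tau<1$ (or, if $\tau=1$, $u$ is constant near $0$ and the bound is trivial).

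The main obstacle I anticipate is handling the borderline normalisation $\tau$ close to $1$: when the gradient carries essentially all the allowed mass, there is no subcritical gap to spare, and one cannot afford to lose a constant $>1$ in front of the critical exponent $\mu_{\al,n}$. The device to get around this is standard in this circle of ideas (cf.\ the proofs in \cite{Ruf2005,LiRuf2008,CassaniTarsi2009}): one uses the exact scaling $D_{\al,n}(R)=D_{\al,n}R^n$ of \eqref{eq:MTOO} and the fact that the nontrivial part of the integral concentrates where $u$ is large, which because of $\|u\|_{L^\al_\al}^\al = 1-\tau$ forces that region to have small $\lam_\al$-measure; a dyadic decomposition of $\{u > k\}$, $k\in\N$, combined with \eqref{eq:MTOO} on each annulus, then yields the bound $C_{\al,n}(1+\|u\|_{L^n_n(0,\infty)}^n)$ with $C_{\al,n}$ independent of $\tau$. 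I would carry out this dyadic argument explicitly, since it is where the dependence on $\|u\|_{L^n_n}^n$ on the right-hand side of \eqref{eq:Abreu} actually enters, and it is the only step that genuinely needs $\al \le n$ beyond the trivial measure comparison on $(0,1)$.
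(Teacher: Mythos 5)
Your plan has a structural flaw in the treatment of the region $(0,1)$, stemming from a misreading of inequality \eqref{eq:MTOO}. You write that your goal is to reduce ``the weighted Moser--Trudinger inequality with the mismatched measure $\lambda_n$ (while the gradient is controlled only in $L^\alpha_\alpha$) to the matched inequality,'' but \eqref{eq:MTOO} already \emph{is} a mismatched inequality: for every pair $(\alpha,\theta)$ it controls $\int_0^R e^{\mu_{\alpha,\theta}|u|^{\alpha/(\alpha-1)}}\,d\lambda_\theta$ under the constraint $\|u'\|_{L^\alpha_\alpha(0,R)}\le 1$, with critical constant $\mu_{\alpha,\theta}=\theta\,\alpha^{1/(\alpha-1)}\sigma_\alpha^{1/(\alpha-1)}$ depending on \emph{both} indices. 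There is therefore no reason to change measure. Your comparison $d\lambda_n\le \tfrac{n\sigma_n}{\alpha\sigma_\alpha}\,d\lambda_\alpha$ on $(0,1)$ is pointwise correct, but then applying \eqref{eq:MTOO} with $\theta=\alpha$ at the exponent $\mu_{\alpha,n}$ is supercritical: $\mu_{\alpha,n}/\mu_{\alpha,\alpha}=n/\alpha>1$ whenever $\alpha<n$, and above the critical constant the supremum in \eqref{eq:MTOO} is infinite, so $D_{\alpha,\alpha}$ is simply not a bound for $\int_0^1 e^{\mu_{\alpha,n}\tilde u^{\alpha/(\alpha-1)}}\,d\lambda_\alpha$. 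The prefactor $n\sigma_n/(\alpha\sigma_\alpha)$ only rescales the integral, not the exponent, and the ``subcriticality room left by $\tau<1$'' is at most a factor $\tau^{-1/(\alpha-1)}$ which degenerates as $\tau\to 1^-$; neither can absorb the fixed gap $n/\alpha$. Thus the dyadic decomposition you propose in the last paragraph is aimed at a secondary borderline issue, not at this mismatch, and does not repair the step.

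The paper avoids this entirely by applying \eqref{eq:MTOO} with $\theta=n$ directly, so that the critical exponent $\mu_{\alpha,n}$ is exactly matched. Concretely: it fixes a large $R$, sets $w(r)=(u(r)-u(R))_+$ (your $\tilde u$), expands $u^{\alpha/(\alpha-1)}\le w^{\alpha/(\alpha-1)}\big(1+\tfrac{C}{\alpha}u(R)^\alpha\big)+\tfrac{\alpha-1}{\alpha}+u(R)^{\alpha/(\alpha-1)}$ via Young's inequality, absorbs the multiplicative bump $\big(1+\tfrac{C}{\alpha}u(R)^\alpha\big)$ into a rescaled function $v$ whose $L^\alpha_\alpha(0,R)$-gradient norm is still $\le 1$ (this is where the full constraint $\|u\|^\alpha_{L^\alpha_\alpha}+\|u'\|^\alpha_{L^\alpha_\alpha}\le 1$ is used: the missing gradient mass is exactly $\|u\|^\alpha_{L^\alpha_\alpha}$, which buys the room to cancel the bump for $R$ large), and applies \eqref{eq:MTOO} with $\theta=n$ to $v$ on $(0,R)$ to get $D_{\alpha,n}R^n$. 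Your analysis of $r\ge R$ (pointwise decay plus the truncation of order $j_\alpha$ giving $\Phi_\alpha\lesssim u^n$) is essentially the paper's and is sound. Your plan would become correct if you replace the $(0,1)$ argument by the $(0,R)$ argument with $\theta=n$ and the Young--rescaling device above; the dyadic machinery is then unnecessary.
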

\begin{proof}
We follow the argument in \cite{Ruf2005} by using the inequality \eqref{eq:MTOO}. Since $u'\leq 0$ then $u$ is non-increasing function. So, for any $r \geqs 0$, we have
\begin{equation}\label{eq:pointwise}
u(r)^\alpha \leq \frac 1{\si_\al r^n}\int_0^r u(s)^\alpha d\lam_\alpha \leq \frac {\int_0^\infty u(s)^\alpha d\lam_\alpha}{\si_\al r^\alpha}.
\end{equation}
Fix a $R \geqs 0$, we define the function $w$ by $w(r) =u(r) -u(R)$ if $r \leqs R$ and $w(r) =0$ if $r \geqs R$. Then, $w \in W^{1,\alpha}_{\al,\al}(0,R)$ and 
\begin{equation}\label{eq:tach}
\|w\|_{L^\al_\al(0,R)}^\al = \int_0^R |u'(s)|^\al d\lam_\alpha \leq 1 - \int_0^\infty |u|^\alpha d\lam_\alpha.
\end{equation}
For $r \leq R$, we have $u(r) = w(r) + u(R)$. Since $\alpha \geq 2$, then there exists a constant $C_\al \geqs 0$ depending only $\alpha$ such that
\[
u(r)^{\frac\alpha {\al -1}}\leq w(r)^{\frac\alpha {\al -1}} + C w(r)^{\frac1{\alpha -1}} u(R) + u(R)^{\frac\alpha {\al -1}}.
\]
By Young's inequality and \eqref{eq:pointwise}, we obtain
\begin{align}\label{eq:b1}
u(r)^{\frac\alpha {\al -1}} &\leq w(r)^{\frac\alpha {\al -1}} \lt(1 + \frac{C}{\al} u(R)^\alpha\rt) + \frac{\al-1}\al  + u(R)^{\frac\alpha {\al -1}}\notag\\
&\leq w(r)^{\frac\alpha {\al -1}} \lt(1 + \frac{C\int_0^\infty u(s)^\alpha d\lam_\alpha}{\al \si_\al R^\al}\rt) + \frac{\al-1}{\alpha} + \lt(\frac1{\si_\al R^\al}\rt)^{\frac1{\al-1}}.
\end{align}
Choosing $R \geq 1$ large enough such that $\frac{C}{\al \si_\al R^\al} \leq 1$, and set 
\[
v(r) = w(r) \lt(1 + \frac{C\int_0^\infty u(s)^\alpha d\lam_\alpha}{\al \si_\al R^\al}\rt)^{\frac{\al-1}\al}.
\]
We have $v \in W^{1,\alpha}_{\al,\al}(0,R)$. Furthermore, there exists a constant $C_\al \geqs 1$ depending only on $\alpha$ such that $(1+ t)^{\alpha -1} \leq 1+ C_\al t$ for any $t\in [0,1]$. Hence, by \eqref{eq:tach} we have
\begin{align*}
\|v\|_{L^\al_\al(0,R)}^\al& = \|w\|_{L^\al_\al(0,R)}^\al\lt(1 + \frac{C}{\al \si_\al R^\al}\rt)^{\alpha-1}\\
&\leq \lt(1 - \int_0^\infty u(s)^\alpha d\lam_\alpha\rt)\lt(1 + \frac{C\int_0^\infty u(s)^\alpha d\lam_\alpha}{\al \si_\al R^\al}\rt)^{\al -1}\\
&\leq \lt(1 - \int_0^\infty u(s)^\alpha d\lam_\alpha\rt)\lt(1 + \frac{C C_\al\int_0^\infty u(s)^\alpha d\lam_\alpha}{\al \si_\al R^\al}\rt)\\
&\leq 1 - \lt(1-\frac{C C_\al}{\al \si_\al R^\al}\rt)\int_0^\infty u(s)^\alpha d\lam_\alpha.
\end{align*}
We can choose a $R \geq 1$ large enough and depending only on $\al$ such that $1-\frac{C C_\al}{\al \si_\al R^\al} \geq 0$. For such a $R$, we have by \eqref{eq:MTOO} that
\begin{equation}\label{eq:lowR}
\int_0^R e^{\mu_{\al,n} v(s)^{\frac \al{\al-1}}} d\lam_n \leq D_{\al,n} R^n.
\end{equation}
For $r \geq R$, by \eqref{eq:pointwise} we have $u(r) \leq (\si_\al)^{-\frac1\al} R^{-1}$ so there is a constant $C(\al,n)$ depending only on $n$ and $\alpha$ such that
\[
\Phi_\al(\mu_{\al, n} u(r)^{\frac\al{\al-1}}) \leq C(\al,n) u(r)^n,
\]
here we use $(j_\al-1) \frac \al{\al-1} \geq n$ and the fact that $u$ is bounded from above by a constant depending only on $\alpha$. Consequently, we have
\begin{equation}\label{eq:upR}
\int_R^\infty \Phi_\al(\mu_{\al, n} u(r)^{\frac\al{\al-1}}) d\lam_n \leq C(\alpha,n) \|u\|_{L^n_n(0,\infty)}^n.
\end{equation}
Putting \eqref{eq:b1}, \eqref{eq:lowR}, \eqref{eq:upR} and the fact $R\geq 1$ together, we obtain
\begin{align*}
\int_0^\infty \Phi_\al(\mu_{\al, n} u(r)^{\frac\al{\al-1}}) d\lam_n&\leq \int_0^R e^{\mu_{\al, n} u(r)^{\frac\al{\al-1}}} d\lam_n + \int_R^\infty \Phi_\al(\mu_{\al, n} u(r)^{\frac\al{\al-1}}) d\lam_n\\
&\leq \int_0^R e^{\mu_{\al,n} v(r)^{\frac\al{\al-1}} + \mu_{\al,n}\lt(\frac{\al-1}\al + \si_\al^{-1/(\al-1)}\rt)} d\lam_n + C(\alpha,n) \|u\|_{L^n_n(0,\infty)}^n\\
&\leq e^{\mu_{\al,n}\lt(\frac{\al-1}\al + \si_\al^{-1/(\al-1)}\rt)} D_{\al,n} R^n + C(\alpha,n) \|u\|_{L^n_n(0,\infty)}^n\\
&\leq C_{\al,n}\big(1 + \|u\|_{L^n_n(0,\infty)}^n\big),
\end{align*}
for some constant $C_{\al,n} \geqs 0$ depending only on $n$ and $\alpha$.
\end{proof}

For any $\tau \geqs 0$ and $u \in W^{1,\alpha}_{\alpha,\alpha}(0,\infty)\cap L_n^n(0,\infty),$ such that $u' \leq 0$ and $\tau \|u\|_{L^\alpha_\alpha(0,\infty)}^\alpha + \|u'\|_{L^\alpha_\alpha(0,\infty)}^\alpha\leq 1$. Applying the inequality \eqref{eq:Abreu} to function $u_\tau(x) = u(\tau^{-\frac1\alpha} x)$ and making the change of variables, we get
\begin{equation}\label{eq:Abreu1}
 \int_0^\infty \Phi_\alpha(\mu_{\al,n} |u|^{\frac \alpha{\alpha -1}}) d\lam_n \leq C_{\al,n}\lt(\frac{1}{\tau^{\frac n\alpha}} + \|u\|_{L^n_n(0,\infty)}^n\rt).
\end{equation}

We are now ready to prove Theorem \ref{3rdMT} by using Proposition \ref{keyprop} and the Moser--Trudinger inequality \eqref{eq:Abreu1}.

\begin{proof}[Proof of Theorem \ref{3rdMT}]
By the P\'olya--Szeg\"o principle from Theorem \ref{PS}, it is enough to prove Theorem \ref{3rdMT} for radially symmetric non-increasing function $u \in W^1 L^{n,q}(\H^n)$, i.e., we will prove the existence of a constant $C_{n,q,\lam} >0$ such that 
\begin{equation}\label{eq:need}
\int_{\H^n} \Phi_q(\al_{n,q} u^{\frac q{q-1}}) dV_g \leq C_{n,q,\lam},
\end{equation}
for any radially symmetric, non-increasing function $u\in W^{1} L^{n,q}(\H^n)$ satisfying 
$$\|\na_g u\|_{n,q}^q -\lam \|u\|_{n,q}^q\leq 1.$$ 
Let $u$ be such a function, we define $v(r) =u^*(\sigma_n r^n)$, we then have $\lim_{r\to \infty} v(r) =0$. Since $n\geq 4$ and $\frac{2n}{n-1} \leq q \leq n$, then by Proposition \ref{keyprop} we have
\[
\|\na_g u\|_{n,q}^q - \lt(\frac{n-1}n\rt)^q \|u\|_{n,q}^q \geq n \sigma_n^{\frac qn}\int_0^\infty |v'(r)|^q r^{q -1} dr
\]
For $\lam \leqs \big(\frac{n-1}n\big)^q$, denote $\tau = \big(\frac{n-1}n\big)^q -\lam \geqs 0$. So, we have
\[
1\geq \|\na_g u\|_{n,q}^q - \lam \|u\|_{n,q}^q \geq n \sigma_n^{\frac qn}\int_0^\infty |v'(r)|^q r^{q -1} dr + \tau \|u\|_{n,q}^q.
\]
We have
\[
\|u\|_{n,q}^q =\int_0^\infty (u^*(t))^q t^{\frac qn -1} dt = n \sigma_n^{\frac qn} \int_0^\infty v(r)^q r^{q -1} dr.
\]
Thus, it holds
\[
n \si_n^{\frac qn} \lt(\int_0^\infty |v'(r)|^q r^{q -1} dr + \tau \int_0^\infty v(r)^q r^{q -1} dr\rt)\leq 1.
\]
Set 
\begin{equation}\label{eq:doiham}
w(r) = \lt(\frac{n \si_n^{\frac qn}}{q \si_q}\rt)^{\frac1q} v(r),
\end{equation}
we then have $w \in W^{1,q}_q(0,\infty)$ and $\|w'\|_{L^q_q(0,\infty)}^q + \tau \|w\|_{L^q_q(0,\infty)}^q \leq 1$. Applying \eqref{eq:Abreu1}, we get
\begin{equation}\label{eq:c1}
\int_0^\infty \Phi_q(\mu_{q,n} |w|^{\frac q{q -1}}) d\lam_n \leq C_{q,n}\lt(\frac{1}{ \tau^{\frac nq}} + \|w\|_{L^n_n(0,\infty)}^n\rt).
\end{equation}
Since $q \leq n$, then by \cite[Lemma $3.2$]{YangLi2019} there exists a constant $C >0$ depending on $n$ and $q$ such that
\[
\Big(\int_{\H^n} |u|^n dV_g\Big)^{\frac qn} \leq C \|\na_g u\|_{n,q}^q \leq C\frac{\big(\frac{n-1}n\big)^q}{\big(\frac{n-1}n\big)^q -\lam} (\|\na_g u\|_{n,q}^q -\lam \|u\|_{n,q}^q) \leq C\frac{\big(\frac{n-1}n\big)^q}{\big(\frac{n-1}n\big)^q -\lam},
\]
which implies
\[
\|v\|_{L^n_n(0,\infty)}^n = \int_0^\infty (u^*(t))^n dt =\int_{\H^n} |u|^n dV_g  \leq \lt(C\frac{\big(\frac{n-1}n\big)^q}{\big(\frac{n-1}n\big)^q -\lam}\rt)^{\frac nq}.
\]
Consequently, there holds
\[
\|w\|_{L^n_n(0,\infty)}^n \leq \lt(C\frac{n \si_n^{\frac qn}}{q \si_q}\frac{\big(\frac{n-1}n\big)^q}{\big(\frac{n-1}n\big)^q -\lam}\rt)^{\frac nq}.
\]
Inserting this estimate and \eqref{eq:doiham} into \eqref{eq:c1}, we obtain
\[
 \int_0^\infty \Phi_q(\al_{n,q} |v|^{\frac q{q -1}}) d\lam_n \leq \frac{\tilde C_{n,q}}{\tau^{\frac nq}},
 \]
for some constant $\tilde C_{n,q} >0$ depending on $n$ and $q$, here we use $\mu_{q,n} = n q^{\frac 1{q-1}} \si_q^{\frac1{q-1}}$. In other hand, we have
\[
\int_{\H^n} \Phi_q(\al_{n,q} |u|^{\frac q{q-1}}) dV_g = \int_0^\infty \Phi_q(\al_{n,q} (u^*(t))^{\frac q{q-1}}) dt = \int_0^\infty \Phi_q\big(\al_{n,q} |v|^{\frac q{q -1}}\big) d\lam_n.
\]
Therefore, we have
\[
\int_{\H^n} \Phi_q(\al_{n,q} |u|^{\frac q{q-1}}) dV_g \leq \tilde C_{n,q}\lt(\Big(\frac{n-1}n\Big)^q - \lam\rt)^{-\frac nq}.
\]
This proves the inequality \eqref{eq:need}. The proof of Theorem \ref{3rdMT} is then completely finished.
\end{proof}


\bibliographystyle{abbrv}

\end{document}